\newtheorem{theorem}{Theorem}[section]
\newtheorem{lemma}[theorem]{Lemma}
\newtheorem{proposition}[theorem]{Proposition}
\newtheorem{problem}[theorem]{Problem}
\theoremstyle{definition}
\newtheorem{definition}[theorem]{Definition}
\theoremstyle{remark}
\newtheorem{remark}[theorem]{Remark}
\newtheorem{example}[theorem]{Example}
\newcommand{\norm}[1]{\Vert#1\Vert}
\newcommand{\abs}[1]{\vert#1\vert}
\DeclareMathOperator{\grad}{\nabla}
\DeclareMathOperator{\argmin}{{argmin}}
\DeclareMathOperator{\dist}{{dist}}
\newcommand{\bq}{\begin{equation}}
\newcommand{\eq}{\end{equation}}
\newcommand{\R}{\mathbb{R}}
\newcommand{\Rn}{\R^n}
\newcommand{\e}{\epsilon}
\newcommand{\bO}{\mathcal{O}}
\newcommand{\Dir}{\mathcal{D}}
\newcommand{\axu}{a(x,u(\cdot))}
\newcommand{\uc}{u(\cdot)}
\begin{document}

\title[quasiconvex envelope]
{Computing the quasiconvex envelope using a nonlocal line solver}

\author{Bilal Abbasi \and Adam M. Oberman}
\address{Adam M. Oberman
\hfill\break\indent
Department of Mathematics and Statistics
\hfill\break\indent
McGill University 
\hfill\break\indent
{\tt adam.oberman@mcgill.ca}}

\date{\today}
\begin{abstract}
  Recently in a series of articles, Barron, Goebel, and Jensen \cite{barron2012functions} \cite{barron2012quasiconvex} \cite{barron2013quasiconvex} \cite{barron2013uniqueness} have studied second order degenerate elliptic PDE and first order nonlocal PDEs for the quasiconvex envelope.  Quasiconvex functions are functions whose level sets are convex.    The PDE is difficult to solve.  In this article we present an algorithm for computing the quasiconvex envelope (QCE) of a given function.  The QCE operator is a level set operator, so this algorithm gives a method to compute convex hull of sets represented by a level set functions. 
We present a nonlocal line solver for the quasiconvex envelope (QCE), based on solving the one dimensional problem on lines.  We find an explicit formula for the QCE of a function defined on a line. 
\end{abstract}

\maketitle

\tableofcontents

\section{Introduction}

In this article we present a numerical method to find the quasiconvex envelope of a given function.  Quasiconvex (QC) functions are functions whose sublevel sets are convex.    The related PDE is a level set PDE \cite{sethian1999level, Osher88frontspropagating} which means that the solution of this problem allows us to solve the following problem.
\begin{problem}
Given a level set representation of a set $S$, $S  = \{ g(x) \le \alpha \}$ find $u(x)$ so that $\{ u \le \alpha \}$ is the convex hull of $S$.
\end{problem}
 Going from the level set representation, taking a convex hull using standard algorithms, and then going back to the level set representation is costly.  The quasiconvex envelope solves this problem directly.  Applications of convex hulls of level sets functions include collision testing (see  \cite{mcadams2010crashing}). In other cases, for example the wearing of a convex stone \cite{firey1974,ishii2003}, the evolution of the front occurs only on the convex hull.  Quasiconvexity also appears in economic theory, economic theory \cite[Chapter 4]{avriel1988generalized} and specifically in ~\cite{arrow1961quasi} in the context of convex preferences between bundles of goods.  Convexity of levels sets of solutions of PDEs is also a topic of interest~\cite{caffarelli1982convexity, kawohl1985rearrangements, colesanti2003quasi}.  As far as we know, there has not been any previous work on computing QC envelopes.  An early paper related to convexity of functions and level sets is \cite{vese}. Convergence rates for parabolic equations to convex envelopes envelopes was studied in \cite{carlier2012}.

Recently in a series of articles, Barron, Goebel, and Jensen \cite{barron2012functions} \cite{barron2012quasiconvex} \cite{barron2013quasiconvex} \cite{barron2013uniqueness} have studied second order degenerate elliptic PDE and first order nonlocal PDEs for the quasiconvex envelope.   They present a second order degenerate elliptic obstacle problem for quasi-convex envelope.  However the PDE does not have unique solutions.  In order to remedy this problem,  they consider robustly  quasiconvex functions, which remain quasiconvex under small linear perturbations.   

Previously, wide stencil finite difference schemes have been used to solve related PDEs, including convex envelopes \cite{ObermanConvexEnvelope, ObermanCENumerics}, and directional convex envelopes \cite{ObermanR1CE}.  We first considered applying these methods to the PDE proposed by Barron, Goebel and Jensen.  However, it is challenging to discretize the small sets of relevant directions used in the operators. 

As an alternative, in \cite{barron2012quasiconvex} a nonlocal first order operator for the envelope in proposed. The method we propose here is also nonlocal and first order, but it is different.
 We present a nonlocal line solver for the quasiconvex envelope (QCE), based on solving the one dimensional problem on all lines.  We find an explicit formula for the QCE of a function defined on a line. It is the solution of a nonlocal first order nonlinear PDE for the QCE.   Numerically, the solution can be found efficiently by a fast sweeping or marching method \cite{sethian1999fast, tsai2003fast} .  Further we show that if we iteratively solve this PDE on every line, we obtain the QCE.  In practice, we solve on a grid, using a finite set of directions for lines at each grid point.  In contrast our previous works on wide stencil schemes, we are not restricted to grid directions: since the operator is first order, we can solve in arbitrary directions.   However, the cost of solution increases with the number of directions used.   Since we do not check every line, we cannot ensure that the grid function is QC.  However, we make an argument that it should be approximately QC, with improvements as the number of directions increases.   
 
 Barron, Goebel and Jensen also proposed the notion of robust-quasiconvex functions, which are functions which remain QC under small linear perturbations.  By making a small modification to our one-dimensional problem,   which does not affect the efficiency, we can compute the robust-QCE on lines, and extend the idea to robustly-QC envelopes in higher dimensions. 
  
The contents of this article are as follows.  In \S\ref{sec:Background} we give relevant definitions and background.  In \S\ref{sec:QC1d} we derive a nonlocal PDE for the QCE in one dimension, then we give an expression for the solution operator.  We also prove that the solution operator satisfies a comparison principle, and is a non-decreasing map.  In \S\ref{sec:RobustQC} we modify the one dimensional PDE to robust QCE, by adding a single term.   In \S\ref{sec:HigherDim} we show how to extend the one dimensional operator to higher dimensions, but applying it along lines.  We prove convergence of the iterative method to the QCE.  We also discuss directional resolution, which comes from using only a finite number of directions.  In \S\ref{sec:NumRes} we present numerical results for the QCE and the robust-QCE, in two and three dimensions.  We show that the solver is fast, we the loss of convexity with finite directions, and we show how to overcome this with the robust-QCE.  
  
 

\section{Quasiconvex Functions}\label{sec:Background}

\subsection{Background}
Write 
$
S_\alpha(u)\equiv\{x \in \Rn \mid u(x)\leq\alpha\}
$
for the $\alpha$-sublevel set of a function $u$.
\begin{definition}
The function $u: \Rn \to \R$ is quasiconvex if every sublevel set $S_\alpha(u)$ is convex. 
Given $g:\Rn \to \R$, continuous and bounded below,  the quasiconvex envelope of $g$, $QCE(g)$, is defined to be 
\begin{equation}
	\label{QCE}
QCE(g)(x) = \sup \{ v(x) \mid v(y) \le g(y) \text{ for every  $y\in \Rn$}, \quad \text{ $v$ quasiconvex.}  \} 
\end{equation}

\end{definition}

\begin{lemma}
The function $u: \Rn \to \R$ is quasiconvex if and only if  
\begin{equation}
\label{QCcond0}
u(tx + (1-t)y) \le \max \{ u(x), u(y) \}, 
\qquad \text{ for all } x,y \in \Rn,  0 \le t \le 1. 	
\end{equation}
\end{lemma}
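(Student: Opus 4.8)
The plan is to prove the two implications separately, in each case choosing the right threshold $\alpha$ to bridge between the sublevel-set formulation and the pointwise inequality \eqref{QCcond0}.

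For the forward direction, I would assume every sublevel set $S_\alpha(u)$ is convex and fix arbitrary $x,y\in\Rn$ and $t\in[0,1]$. The key observation is to take $\alpha := \max\{u(x),u(y)\}$: then both $x\in S_\alpha(u)$ and $y\in S_\alpha(u)$ by construction, so convexity of $S_\alpha(u)$ gives $tx+(1-t)y\in S_\alpha(u)$, which is exactly the statement $u(tx+(1-t)y)\le\max\{u(x),u(y)\}$.

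For the converse, I would assume \eqref{QCcond0} holds and fix an arbitrary level $\alpha\in\R$; if $S_\alpha(u)$ is empty or a single point it is trivially convex, so suppose $x,y\in S_\alpha(u)$ and $t\in[0,1]$. Then $u(x)\le\alpha$ and $u(y)\le\alpha$, hence $\max\{u(x),u(y)\}\le\alpha$, and \eqref{QCcond0} yields $u(tx+(1-t)y)\le\max\{u(x),u(y)\}\le\alpha$, i.e. $tx+(1-t)y\in S_\alpha(u)$. Since $\alpha$ was arbitrary, every sublevel set is convex.

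There is no real obstacle here: the argument is elementary, and the only point requiring any care is the symmetric role of the two characterizations, namely that one should pass through the particular value $\alpha=\max\{u(x),u(y)\}$ in one direction and through a generic $\alpha$ in the other. I would state the proof compactly in a few lines.
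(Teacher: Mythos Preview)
Your proof is correct and follows essentially the same approach as the paper: the paper simply compresses both directions into the observation that convexity of $S_\alpha(u)$ for all $\alpha$ is equivalent to the implication $u(x),u(y)\le\alpha\Rightarrow u(tx+(1-t)y)\le\alpha$, which in turn is equivalent to \eqref{QCcond0}. Your version just makes explicit the choice $\alpha=\max\{u(x),u(y)\}$ in the forward direction and the generic $\alpha$ in the converse, which is exactly the content of the paper's one-line argument.
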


\begin{proof}
The set $S$ is convex if whenever $x$ and $y$ are in $S$ then so is the line segment $tx + (1-t)y$, $t \in [0,1]$. 
Thus convexity of $S_\alpha(u)$ is equivalent to condition 
\[
u(x), u(y) \le \alpha \implies u(t x + (1-t)y) \le \alpha,
\quad \text{ for all } 0 \le t \le 1
\]
which is equivalent to \eqref{QCcond0}.
\end{proof}
Figure (\ref{fig:CEandQC1d}) provides a illustrates the convex envelope and the quasiconvex envelope.  It also illustrates the robust-QCE, defined below. 

The following example shows that,   at least when the function has flat parts, local conditions are not enough (even in one dimension) to determine if a function is quasiconvex. 
\begin{example}[Local conditions are not enough]\label{ex:nonlocal}
Let $C$ be a closed, bounded convex set, and let $u(x) = \dist(x,C)$ be the distance function to $C$.  Then $u$ is convex, and the sublevel sets are as well, so $u$ is quasiconvex.  On the other hand the sublevel sets of $v(x) = -u(x)$ are not convex, but unless $C$ consists of a single point, the only way to see this is by taking a triplet $tx+(1-t)y,x,y$ in \eqref{QCcond0} where $tx+(1-t)y$ is in $C$ and $x,y$ are outside $C$.  In particular, in one dimension, the function $v(x) = -\dist(x,[-1,1])$ is not QC, but we can only check this using points which are far apart.  
\end{example}

\begin{figure}[t]
    \centering
    \begin{subfigure}[b]{.5\textwidth}
    \centering
        \includegraphics[width=.95\textwidth]{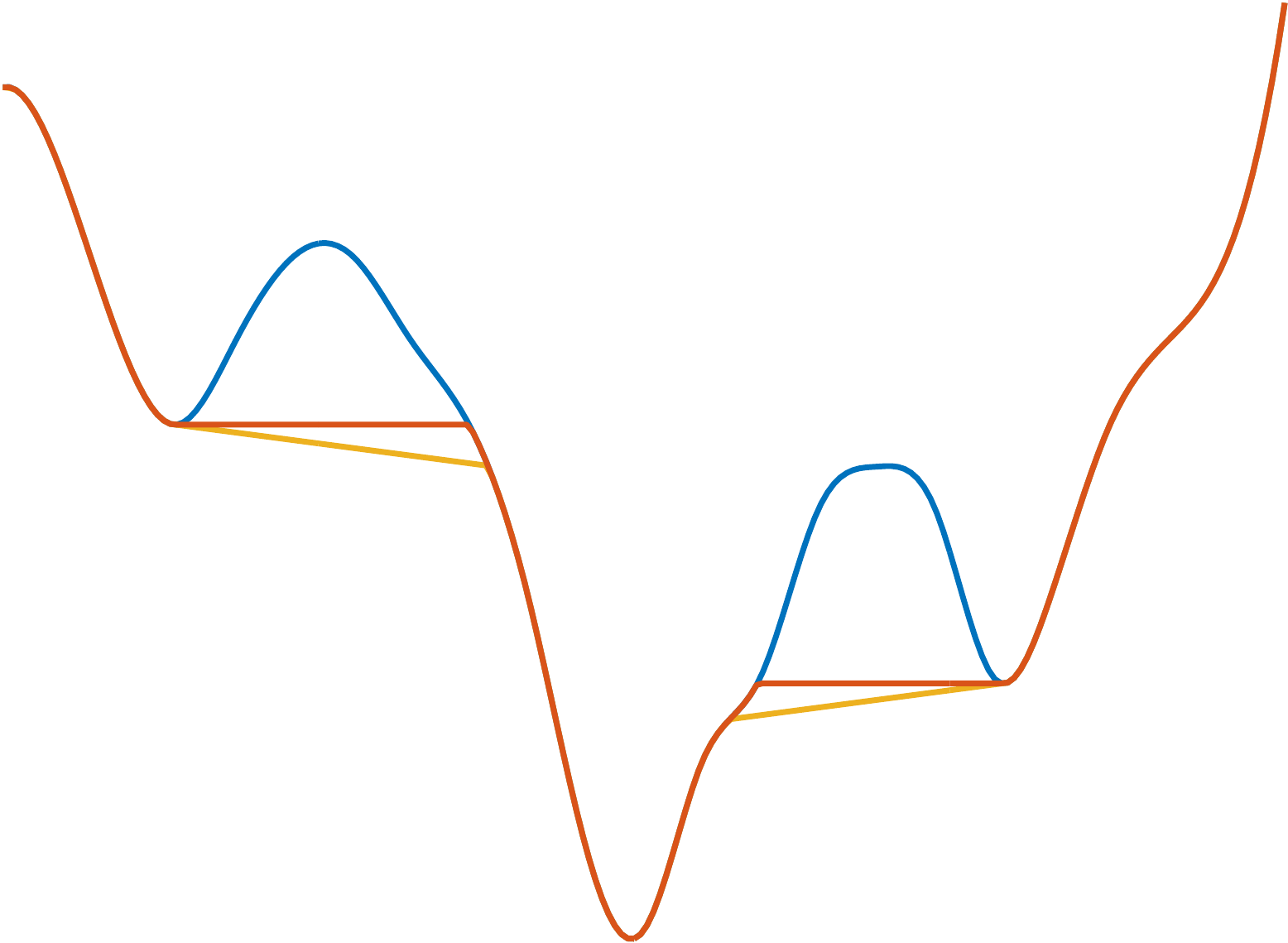}
        \caption*{(robust) quasiconvex envelope}
    \end{subfigure}
    ~
    \begin{subfigure}[b]{.5\textwidth}
    \centering
        \includegraphics[width=.95\textwidth]{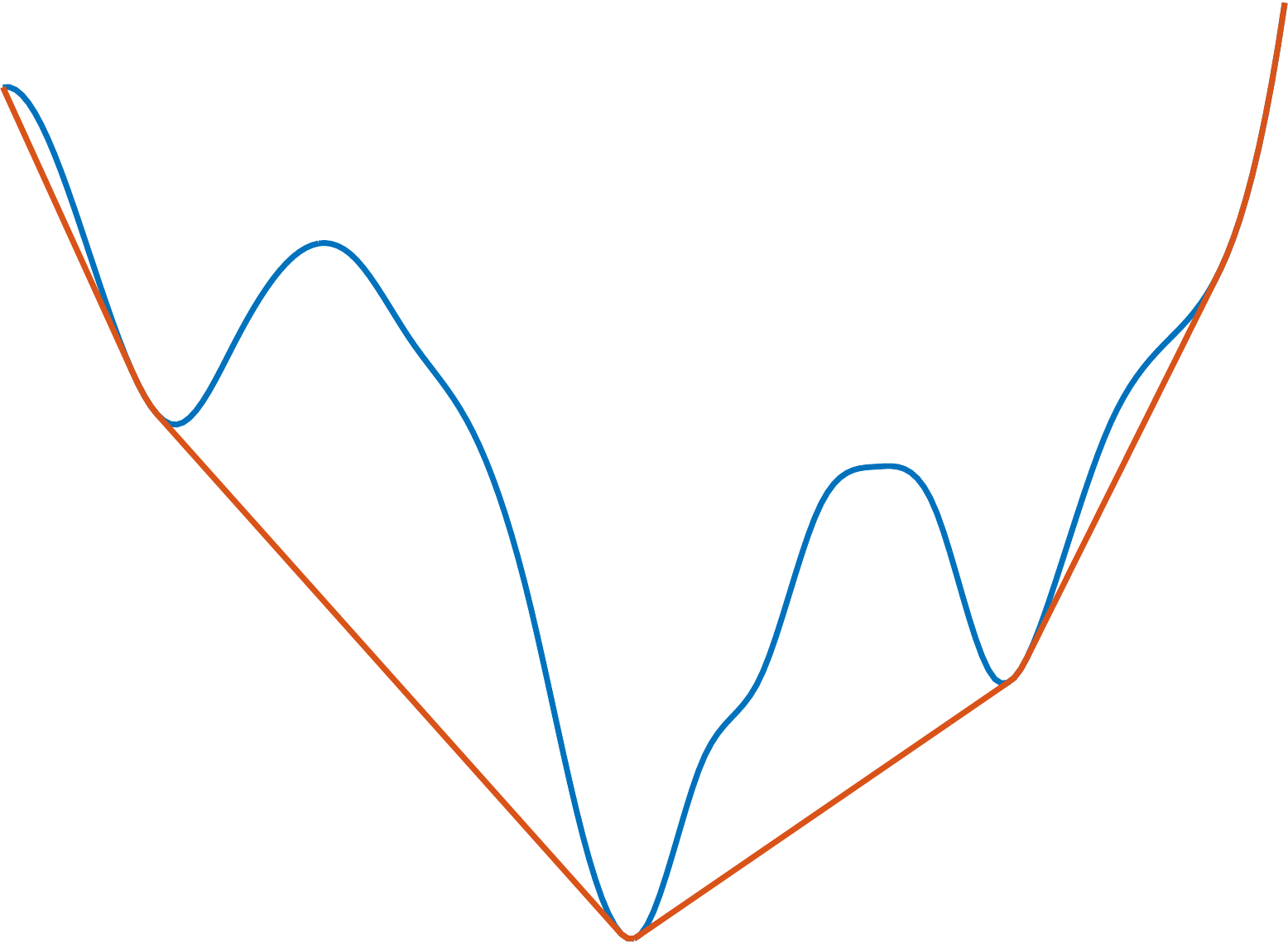}
        \caption*{convex envelope}
    \end{subfigure}
    \caption{A function (blue) and its envelopes.  Left: the quasiconvex (red) and robustly quasiconvex (yellow) envelope. Right: the convex envelope (red).}
    \label{fig:CEandQC1d}
\end{figure}

\begin{definition}
\label{Defn:dirQC} The function 
$u:\mathbb{R}^n\rightarrow\mathbb{R}$ is quasiconvex along the line $\ell = \{ x+ tv \mid t\in \R\}$ if the restriction of $u$ to  the line $\ell$ is quasiconvex.  
\end{definition}
\begin{proposition}
\label{dirQC}
The function $u$ is quasiconvex on $\Rn$ if and only if $u$ is quasiconvex on every line.  
\end{proposition}
\begin{proof}
This is clear from \eqref{QCcond0}. 
\end{proof}

\subsection{One dimensional characterization of QC}
In one dimension we recall the following simple characterization of quasiconvex functions. Refer to Figure~\ref{fig:QC1d}, which also illustrates the algorithm which follows.  
\begin{definition}[Increasing, Decreasing, and Down-Up functions]
Let $I = [a,b]$ be a bounded interval in $\R$. Write $C(I)$ for the set of continuous functions on the interval $I$.
Suppose $u \in C(I)$.  
We say $u$ is (nonstrictly) increasing, and write,
\[
u\in CI^+(I), \quad     \text{if $x < y$ implies $u(x) \leq u(y)$.}  
\]
We say $u$ is (nonstrictly) decreasing, and write 
\[
u\in CI^-(I), \quad \text{ if $x < y$ implies $u(x) \geq u(y)$.}
\]
 We say $u$ is down-up if there exists a global minimizer $x^*$ of $u$ and if the restriction of $u$ to $[a,x^*]$ is decreasing and the restriction of $u$ to $[x^*,b]$ is increasing. 
\end{definition}

\begin{proposition}\label{propDownUp}
Suppose $u:\R\rightarrow\R$ is continuous and bounded below. Let $x^*$ be a global minimizer of $u$. Then $u$ is quasiconvex if and only if $u$ is down-up.
\end{proposition}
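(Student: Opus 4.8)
The plan is to prove both implications directly from the one-dimensional form \eqref{QCcond0} of quasiconvexity, using the elementary fact that, for points on the real line, the set of convex combinations $tx+(1-t)y$ with $t\in[0,1]$ is exactly the closed interval between $x$ and $y$.

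For the direction \emph{down-up $\Rightarrow$ quasiconvex}, I would fix $x<y$ in $\R$ and a point $z\in[x,y]$, and split into two cases according to the position of $z$ relative to $x^*$. If $z\le x^*$, then $x\le z\le x^*$, and since $u$ is decreasing on $(-\infty,x^*]$ we get $u(z)\le u(x)\le\max\{u(x),u(y)\}$. If $z\ge x^*$, then $x^*\le z\le y$, and since $u$ is increasing on $[x^*,\infty)$ we get $u(z)\le u(y)\le\max\{u(x),u(y)\}$. In either case \eqref{QCcond0} holds, so $u$ is quasiconvex.

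For the converse, assume $u$ is quasiconvex and let $x^*$ be a global minimizer (whose existence is posited in the hypothesis). To see that $u$ is decreasing on $(-\infty,x^*]$, take $x<y\le x^*$; then $y$ lies between $x$ and $x^*$, so \eqref{QCcond0} gives $u(y)\le\max\{u(x),u(x^*)\}=u(x)$, the last equality because $u(x^*)$ is the global minimum. Symmetrically, for $x^*\le x<y$ the point $x$ lies between $x^*$ and $y$, so $u(x)\le\max\{u(x^*),u(y)\}=u(y)$, so $u$ is increasing on $[x^*,\infty)$. Hence $u$ is down-up.

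There is no real obstacle; the only points needing care are conventional. First, the equivalence is meant under the standing assumption that a global minimizer exists: a bounded-below quasiconvex function such as $e^{-x}$ has none, which is precisely why the statement opens with "Let $x^*$ be a global minimizer". Second, when $u$ has flat parts or several global minimizers one should observe that between any two global minimizers $u$ must be constant — again immediate from \eqref{QCcond0} since any intermediate point has value $\le$ the common minimum — so the monotonicity claims above, phrased with nonstrict inequalities, hold no matter which minimizer is selected as $x^*$.
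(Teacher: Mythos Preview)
Your proof is correct and, in fact, a bit cleaner than the paper's. The paper argues the necessity direction (quasiconvex $\Rightarrow$ down-up) via the sublevel-set definition: for each $x\neq x^*$ the set $\{y\mid u(y)\le u(x)\}$ is a convex interval containing $x^*$, so $x$ must be one of its endpoints, and one then reads off the appropriate one-sided monotonicity at $x$. You instead apply the three-point condition \eqref{QCcond0} directly to the triple $(x,y,x^*)$, which gives the global monotonicity on each side of $x^*$ in a single stroke. For sufficiency the paper proceeds by contradiction, assuming a failure of \eqref{QCcond0} on some triple and then locating a point of the wrong monotonicity, whereas you give a direct two-case argument according to whether the intermediate point lies left or right of $x^*$. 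Your route is shorter and avoids both the endpoint discussion and the contradiction; the paper's sublevel-set viewpoint, on the other hand, foreshadows the viscosity-solution argument in the proof of Proposition~\ref{firstorderQC}, where the same endpoint reasoning reappears.
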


\begin{proof}
 
Step 1: necessity. Assume $u$ is quasiconvex.   Choose $x \not = x^*$. 
Since $u$ is QC, the sublevel set $\{ y \mid u(y) \leq u(x) \}$ is convex and contains $x^*$.  In particular, it is an interval with $x$ at one endpoint.  Assume for now that $x$ is the left endpoint.  This means that $u(x)\leq u(x-h)$ for $h>0$ so taking $h \to 0$ we see that $u$ is (nonstrictly) decreasing at $x$.  Similarly if $x$ is the right endpoint, then $u(x)\leq u(x+h)$ for $h>0$. Similarly, $u$ is nonstrictly increasing for at $x$ in this case.

Step 2: sufficiency. Suppose, for contradiction,  (ii) holds for  $u$,  but  $u$  is not quasiconvex. 
Then there exists $x,y,z$ such that $x\in(y,z)$ satisfies $u(x)>\max\{u(y),u(z)\}$.   We can assume that $x^*$ is outside the interval $(y,z)$, since, if not, we can shrink the interval.
First suppose that $x^* > z$.  Then since $u(y) < u(x)$, there is a point in $(y,z)$ at which $u$ is increasing. This contradicts our assumption (ii). Next if $x^* < y$, we can make a similar argument using the interval $(x,z)$ and obtain a similar contradiction.
\end{proof}

\subsection{Nonlocal PDE for quasiconvexity}

For continuously differentiable quasiconvex functions, one can derive a characterization which is analogous to the supporting hyperplane condition of convex functions. This is obtained by taking the limit $t\to 0$ in $u(x+t(y-x))-u(x) \ge 0$, when $u(x) \le u(y)$, yielding the following necessary and sufficient condition
\bq\label{QCfirstorder}
u(x) \le u(y) \implies  \grad u(x) \cdot (y-x) \ge 0
\eq
This  condition can be extended to continuous functions, when interpreted in the viscosity sense~\cite{barron2012quasiconvex}.  This leads to a nonlocal Hamilton-Jacobi equation which characterizes QC functions:
  $H[u](x) \le 0$ for all $x \in \Omega$, in the viscosity sense 
  where 
\begin{equation}
\label{HJnd}
H[u](x)\equiv H(x,u,\grad u(x)) \equiv\sup_{ \{ y\in\Omega \mid  u(y)\leq u(x)\} } \grad u(x) \cdot (y-x) 	
\end{equation}

\begin{remark}The nonlocal operator is costly because evaluating it at each point $x$ involves computing a local gradient against a large number of points $y$.
\end{remark}

\section{QCE in one dimension}\label{sec:QC1d}

\subsection{A nonlocal PDE for QC in one dimension}
In this section we give a PDE obstacle version of the one dimensional QCE.

\begin{definition}\label{axumin}
Given a continuous function $u	\in C(I = [a,b])$, define
\begin{align*}
	u_m& = \min_{ x\in I } u(x)
\\	I_m &= \argmin_{x \in I} u(x)
\\	x_l(\uc) & = \min I_m 
\\	x_r(\uc) &= \max I_m	
\end{align*}
Define the function $\axu: I \to \{-1, 0, 1\}$ by 
\begin{equation}\label{axu}
\axu =
\begin{cases}
	-1, & x < x_l
	\\ 0,& x \in [x_l, x_r]	
	\\+1, & x > x_r
\end{cases}	
\end{equation}
\end{definition}

\begin{definition}
Suppose $u: \R \to \R$  is continuous and bounded below. Define $\axu$, $x_l, x_r, u_m$ according to Definition~\ref{axumin}. We say that 
\begin{equation}\label{eqQCcond}
\axu u'(x)\leq 0	
\end{equation}
holds in the viscosity sense if:
(i) for any smooth function $\phi$,  whenever $x$ is a local maximum of $u-\phi$, then 
\[
\axu \phi'(x)\leq 0
\]
and 
\begin{equation}\label{um}\tag{ii}
u(x) = u_m, \quad \text{for all } x\in [x_l, x_r]	
\end{equation}
\end{definition}

\begin{proposition}
\label{firstorderQC}
Suppose $u:\R\rightarrow\R$ is continuous and bounded below.  Then $u$ is quasiconvex
if and only if \eqref{eqQCcond} holds in the viscosity sense,
\end{proposition}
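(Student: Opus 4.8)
The plan is to deduce Proposition~\ref{firstorderQC} from the order-theoretic characterization already in hand, namely Proposition~\ref{propDownUp}: a continuous, bounded-below $u$ is quasiconvex if and only if it is down-up. So it suffices to show that ``$u$ is down-up'' is equivalent to ``\eqref{eqQCcond} holds in the viscosity sense,'' and I will prove the two implications separately. Throughout I will assume $u$ attains its minimum (as in Proposition~\ref{propDownUp}), so that $I_m$, $x_l$, $x_r$, and hence $\axu$, are defined; the general case reduces to this one by restricting to bounded intervals $I=[a,b]$ as in Definition~\ref{axumin}, where continuity forces the minimum to be attained, together with the fact that quasiconvexity is checked interval by interval.

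For the forward implication, suppose $u$ is down-up with global minimizer $x^*$. First I verify condition (ii): since $u$ is nonincreasing on $(-\infty,x^*]$, nondecreasing on $[x^*,\infty)$, and everywhere $\ge u_m$, any two minimizers $y_1<y_2$ force $u\equiv u_m$ on $[y_1,y_2]$; hence the argmin set $I_m$ is exactly the interval $[x_l,x_r]$ and $u\equiv u_m$ there. Next I verify (i): let $\phi$ be smooth with $u-\phi$ attaining a local maximum at $x_0$, so that $\phi'(x_0)$ lies in the superdifferential $D^+u(x_0)$. If $x_0\in[x_l,x_r]$ then $\axu=0$ and the inequality is trivial. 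If $x_0<x_l$, then $x_0$ sits in the region where $u$ is nonincreasing, so $D^+u(x_0)\subseteq(-\infty,0]$; combined with the sign that $\axu$ carries on $\{x<x_l\}$, this yields $\axu\phi'(x_0)\le 0$. The case $x_0>x_r$ is the mirror image, using that $u$ is nondecreasing there and the opposite sign of $\axu$.

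For the reverse implication, suppose (i) and (ii) hold. By (ii), $u\equiv u_m$ on $[x_l,x_r]$, so together with the definitions of $x_l,x_r$ as the extreme minimizers, $I_m=[x_l,x_r]$ is a single interval; take $x^*=x_l$ as global minimizer. On the open ray $(-\infty,x_l)$, condition (i) says precisely that $D^+u(x)$ is everywhere contained in the appropriate half-line, and by the standard viscosity fact that a continuous function with one-signed superdifferential on an interval is monotone there, $u$ is nonincreasing on $(-\infty,x_l]$. Symmetrically, $u$ is nondecreasing on $[x_r,\infty)$, and on $[x_l,x_r]$ it is constant, hence both. Thus $u$ is nonincreasing on $(-\infty,x^*]$ and nondecreasing on $[x^*,\infty)$, i.e.\ $u$ is down-up, and Proposition~\ref{propDownUp} completes the argument.

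The main obstacle is the ``one-signed superdifferential implies monotone'' step in the reverse direction. The naive proof --- run an affine function through two values $u(p)<u(q)$ and examine where $u$ minus that affine function is maximized over $[p,q]$ --- breaks down when the maximizer is the right endpoint $q$, which forces either a passage to the full ray with a limiting argument or the insertion of a small concave perturbation to push the maximizer into the interior before invoking (i). A secondary point that wants careful bookkeeping is why condition (ii) cannot be dropped: without it, $I_m$ could be disconnected (two wells joined by a bump), $\axu$ would vanish on the bump so that (i) is vacuous there, and $u$ would fail to be quasiconvex --- so (ii) is exactly the hypothesis that glues the two monotone wings together.
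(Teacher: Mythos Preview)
Your proposal is correct and takes a genuinely different route from the paper. The paper argues both implications directly: for the forward direction it works with the sublevel set $\{y:u(y)\le u(x)\}$ and its endpoint structure to bound $\phi'(x)$ at a touching point, and for the reverse direction it argues by contradiction, taking a bad triple $y<x<z$ with $u(x)>\max\{u(y),u(z)\}$, running a linear test function through $(y,u(y))$ and $(x,u(x))$, and locating an interior local maximum of $u-\phi$ where condition~(i) is violated (with a case split on the position of $(y,z)$ relative to $[x_l,x_r]$). You instead factor through Proposition~\ref{propDownUp}, reducing everything to ``down-up $\Leftrightarrow$ viscosity condition,'' and then invoke the standard fact that a one-signed superdifferential on an interval forces monotonicity. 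Your route is more modular and makes the role of condition~(ii) transparent (it forces $I_m=[x_l,x_r]$ so the two monotone wings meet), while the paper's argument is more self-contained but pays for it with the positional case analysis. The endpoint obstacle you flag in the monotonicity lemma is precisely the analogue of the paper's need to place the local maximum of $u-\phi$ strictly inside the interval, and your suggested fix (a small strictly concave perturbation of the affine comparison function) is the standard remedy; the paper sidesteps this by choosing the secant so that $u-\phi$ vanishes at both endpoints, forcing an interior maximizer.
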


\begin{proof}
Since $u$ is continuous and bounded below, it has at least one global minimizer, $x^*$. 

Step 1. Assume $u$ is quasiconvex, we wish to show \eqref{eqQCcond} holds.  Suppose $u-\phi$ has a local maximum at $x$.  We can assume that $\phi(x)=u(x)$.  Then $\phi(y)\geq u(y)$ in a neighbourhood of $x$. 

Since $u$ is QC, the sublevel set $\{ y \mid u(y) \leq u(x) \}$ is convex and contains $[x_l, x_r]$.  In particular, it is an interval which contains  $x$.   Assume for now that $x$ is the left endpoint. 
This means that $u(x)\leq u(x-h)$ for $h>0$ small. Then
\[
\phi(x)=u(x)\leq u(x-h)\leq \phi(x-h) \implies \phi(x)-\phi(x-h)\leq 0.
\]
Dividing by $h$ and taking $h\rightarrow0$ yields $ \axu \phi^\prime(x)\leq0$. 

Similarly if $x$ is the right endpoint, then $u(x)\leq u(x+h)$ for $h>0$ small. Hence by a similar calculation we have $\phi^\prime(x)\geq0$.

Finally, if $ x\in (x_l, x_r)$ then the sublevel set is simply $[x_l, x_r]$, so $u$ is constant near $x$ and if $u-\phi$ has a local maximum at $x$, then $\phi'(x) = 0$. 

In each case,  $\axu \phi^\prime(x) \leq0$ holds, as desired.  

Step 2. Suppose, for contradiction,  that \eqref{eqQCcond} holds for  $u$,  but  $u$  is not quasiconvex. 
Then there exists $x,y,z$ such that $x\in(y,z)$ satisfies $u(x)>\max\{u(y),u(z)\}$.

First suppose that $z < x_l$.  Define  $\phi(x)$ to be linear with slope $0 < m = (u(x)-u(y))/(x-y)$. There is a point $y' \in (y,x)$ which is a local max of $\phi- u$.  
But then  $\axu \phi_x(y') = m  >  0$ which means that \eqref{eqQCcond} does not hold.    

Next if $x_r < y$, we can make a similar argument using the interval $(x,z)$ and obtain a similar contradiction.

Finally, if $(y,z)$ overlaps with $(x_l, x_r)$, we must have $x$ on one side of the interval, since $u = u_m < u(x)$ on $(x_l, x_r)$ so we can redefine one of $y,z$ to recover the previous case. 

So $u$ must be quasiconvex. 
\end{proof}

\begin{proposition}\label{prop:QCE1d}
Suppose $g: [a,b] \subset \R \to \R$ is continuous. 
Define $g_m, I_g$ according to Definition~\ref{axumin}. Define $\axu$ by \eqref{axu}.  
The quasiconvex envelope of $g$ is the viscosity solution of
\begin{equation}\label{1dobs}
\tag{1DOb}
\max\{u(x)-g(x), \axu u_x(x)\} = 0,
\quad x \in (a,b)\setminus I_g
\end{equation}
along with 
\begin{equation}\label{Dirichlet1da}
u(a) = g(a), 
\qquad
u(b) = g(b),
\qquad
u(x) = g_m, 	\quad x \in I_g
\end{equation}
The increasing and decreasing envelopes, $u^+, u^-$ of $g$ are viscosity solutions of 
\begin{align}
	\max\{u^+(x)-g(x),  ~~u^+_x(x)\} = 0
	\\
	\max\{u^-(x)-g(x), -u^-_x(x)\} = 0
\end{align}
respectively, along with the boundary conditions \eqref{Dirichlet1da}.

\end{proposition}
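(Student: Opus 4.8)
The plan is to show that the viscosity solution $u$ of the obstacle problem \eqref{1dobs}–\eqref{Dirichlet1da} coincides with $QCE(g)$ by a two-sided comparison: first that $u$ is itself an admissible competitor in the definition \eqref{QCE} (so $u \le QCE(g)$ is the wrong direction — rather, $u$ being quasiconvex and below $g$ gives $u \le QCE(g)$... wait, $u \le QCE(g)$ since $QCE(g)$ is the \emph{sup}), and second that $u \ge v$ for every quasiconvex $v \le g$. Let me restate cleanly: I would prove (a) $u$ is quasiconvex and $u \le g$, hence $u \le QCE(g)$ by maximality of the envelope; and (b) $QCE(g) \le u$, by showing $QCE(g)$ is a viscosity subsolution of \eqref{1dobs} satisfying the boundary data, and invoking a comparison principle for the obstacle problem. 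Since the proposition also asserts this, I may first need to record that \eqref{1dobs} has a comparison principle — this is presumably the content of the later results promised in the introduction, so I would cite or defer to that; alternatively, since we are in one dimension, step (b) can be made direct and elementary.

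For step (a): the constraint $u \le g$ on $(a,b)\setminus I_g$ is immediate from the $\max$-formulation (the first argument is $\le 0$), and on $I_g$ we have $u = g_m \le g$ by \eqref{Dirichlet1da}; at the endpoints $u = g$. To see $u$ is quasiconvex, I would apply Proposition~\ref{firstorderQC}: I must check $\axu\, u'(x) \le 0$ in the viscosity sense, where the coefficient is built from $u$'s own minimizing set. The key observation is that $g_m$ is attained by $u$ (on $I_g$ where $u = g_m$, and $u \ge$ nothing forces it lower since... ) — more carefully, one shows $\min u = g_m$ and the minimizing set of $u$ equals $I_g$ (or at least contains it and shares the same $x_l, x_r$), so the coefficient $a(x,u(\cdot))$ agrees with the given $\axu$. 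Then the viscosity inequality $\axu u_x \le 0$ from \eqref{1dobs} is exactly condition (i), and condition (ii) is \eqref{Dirichlet1da} on $I_g$. Hence $u$ is quasiconvex. The mild technical point here is verifying $\argmin u = I_g$ rather than something smaller; this uses that on each side of $I_g$ the equation forces monotonicity away from the minimum, so $u$ cannot dip below $g_m$.

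For step (b): let $v$ be quasiconvex with $v \le g$. Then $v \le g$ on $I_g$; moreover since $v$ is quasiconvex its value on the interval $I_g$ is at most $\max$ of its endpoint values, and more to the point one shows $v \le g_m$ on $I_g$ is not needed — instead I claim $v \le u$ everywhere. Away from $I_g$, $v$ is quasiconvex so by Proposition~\ref{firstorderQC} it is a subsolution of $a(x,v(\cdot))\,v_x \le 0$; on $(a, x_l)$ a quasiconvex $v$ below $g$ with $v(a) \le g(a) = u(a)$ is nonincreasing there while $u$ solves the decreasing envelope equation, which is the \emph{largest} function $\le g$ that is nonincreasing, giving $v \le u$; symmetrically on $(x_r, b)$; and on $I_g$, $v \le g_m = u$ follows because any point of $I_g$ lies between points where $g$ (hence an upper bound for $v$) is close to $g_m$... this last sub-step is where I expect the real work: a clean argument is that $v$ quasiconvex and $\le g$ implies $v \le g_m$ on $I_g$ by taking a chord through a point of $I_g$ with endpoints approaching global minimizers of $g$ on either side. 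Taking $\sup$ over such $v$ yields $QCE(g) \le u$. Combined with (a), $u = QCE(g)$.

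The statements for $u^+$ and $u^-$ follow by the identical template with the scalar coefficients $+1$ and $-1$ replacing $\axu$: $u^+$ is the largest nondecreasing function below $g$ with $u^+(a) = g(a)$, shown by checking $(u^+)_x \ge 0$ in the viscosity sense (from the $\max$-equation) characterizes monotonicity, and comparison against any competitor; likewise for $u^-$. The main obstacle, as flagged, is the endpoint/interior bookkeeping: ensuring the data-dependent coefficient $a(x, u(\cdot))$ for the \emph{solution} $u$ matches the coefficient $a(x, g(\cdot))$ appearing in \eqref{1dobs}, i.e. that passing to the envelope does not move the argmin. I would isolate this as a short lemma ($\argmin u = I_g$, $\min u = g_m$) proved directly from the obstacle equation before running the comparison.
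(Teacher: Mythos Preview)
Your two-sided comparison strategy is sound, but the paper's proof is considerably shorter and avoids most of the bookkeeping you flag. The paper argues via Perron's method: the viscosity solution of \eqref{1dobs} is by construction the supremum of all viscosity subsolutions satisfying the boundary data. But by Proposition~\ref{firstorderQC}, a function $v$ is a subsolution of \eqref{1dobs} (i.e.\ $v\le g$ and $a(x,v(\cdot))\,v_x\le 0$ in the viscosity sense) if and only if $v$ is quasiconvex and $v\le g$. Thus the feasible set for the Perron formula is \emph{literally} the feasible set in the definition \eqref{QCE} of $QCE(g)$, and the two suprema coincide.

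This packaging buys you exactly the lemma you were worried about. You correctly identify the delicate point: the coefficient $a(x,u(\cdot))$ depends on the solution's own argmin, and you need $\argmin u = I_g$ for the equation \eqref{1dobs} to match the hypothesis of Proposition~\ref{firstorderQC}. In the Perron argument this issue dissolves, because one never isolates ``the'' solution and checks its argmin; instead one works at the level of subsolutions, where Proposition~\ref{firstorderQC} applies to each competitor $v$ with its own $a(x,v(\cdot))$, and the boundary condition on $I_g$ simply records that every quasiconvex $v\le g$ satisfies $v\le g_m$ there. Your explicit route (a)--(b) can be made to work, but you would indeed need the auxiliary lemma $\min u = g_m$, $\argmin u \supseteq I_g$, and the side-by-side monotone comparison on $(a,x_l)$ and $(x_r,b)$; the paper's approach sidesteps all of that in two sentences.
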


\begin{proof}
By the Perron formulation, the supremum of all subsolutions of \eqref{1dobs} is the unique viscosity solution. However, by Proposition~\ref{firstorderQC}, the set of all subsolutions of \eqref{1dobs} are also those functions which are quasiconvex and bounded above by $g$, along with boundary conditions \eqref{um}. Hence the two feasible sets are identical and so the two formulations coincide.
\end{proof}

\subsection{Solution operator for $QCE(g)$  in one dimension}
We now give an explicit solution formula for the QCE in one dimension. 


\begin{definition}[Increasing and decreasing envelopes]
For $g \in C(I)$, 	We define the increasing and decreasing envelopes of $g$ to be 
\begin{align*}
	CIE^+(g)(x) &= \sup \{ v(x) \mid v \leq 	g, \quad  v \in CI^+(I)  \} 
	\\
	CIE^-(g)(x) &= \sup \{ v(x) \mid v \leq 	g, \quad  v \in CI^-(I)  \} 	
\end{align*}
\end{definition}

\begin{definition}
Suppose $g: [a,b] \subset \R \to \R$ is continuous. 
Define the solution maps $S^+, S^-, S^0 : C[a,b] \to C[a,b]$ by 
\begin{equation}\label{Sdefn}
\begin{aligned}
S^+(g)(x) &= \min\{ g(y)  \mid { y \geq x } \}
\\
S^-(g)(x) &= \min\{ g(y)  \mid { y \leq x } \}
 \\
S^0(g)(x) &= \max \{ S^+(g)(x), S^-(g)(x) \}.
\end{aligned}
\end{equation}

\end{definition}

\begin{proposition}[The increasing, decreasing, and quasiconvex envelope operators]
Let $g\in C(I)$.  Then $S^+(g) \in C^+(I)$,  $S^-(g) \in C^-(I)$ so the function increasing, and  decreasing, respectively.  
Furthermore, $S^0(g)$ is up-down, and each of the operators is an envelope operator, in particular,
\[
u = QCE(g)(x) = S^0(g)
\] 	
\end{proposition}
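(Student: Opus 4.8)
The statement has two essentially independent parts: the one-sided operators $S^+, S^-$, and the combined operator $S^0$. Because the construction is symmetric under $x \mapsto a+b-x$ (which swaps the roles of $a$ and $b$, of increasing and decreasing, and of $S^+$ and $S^-$), it suffices to treat $S^+$ and $CIE^+$ in detail; everything about $S^-$ and $CIE^-$ follows by reflection. For $S^0$ I would deduce all claims from the properties of $S^\pm$ together with Proposition~\ref{propDownUp}.

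\emph{The one-sided operators.} Monotonicity of $S^+(g)$ is immediate: if $x \le x'$ then $\{y \in I : y \ge x'\} \subseteq \{y \in I : y \ge x\}$, so minimizing $g$ over the smaller set yields a larger value, i.e.\ $S^+(g)(x) \le S^+(g)(x')$, whence $S^+(g) \in CI^+(I)$. For continuity, fix $x < x'$ and let $y^\ast \in [x,b]$ attain $S^+(g)(x)$; if $y^\ast \ge x'$ then $S^+(g)(x) = S^+(g)(x')$, while if $y^\ast < x'$ then $0 \le S^+(g)(x') - S^+(g)(x) \le g(x') - g(y^\ast) \le \omega_g(x'-x)$, where $\omega_g$ is the modulus of continuity of $g$ on the compact interval $[a,b]$; either way $S^+(g) \in C(I)$. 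Next, $x \in \{y : y \ge x\}$ gives $S^+(g)(x) \le g(x)$, so $S^+(g)$ is admissible in the definition of $CIE^+(g)$ and $S^+(g) \le CIE^+(g)$; conversely, for any $v \in CI^+(I)$ with $v \le g$ and any $y \ge x$ one has $v(x) \le v(y) \le g(y)$, so $v(x) \le S^+(g)(x)$, giving $CIE^+(g) \le S^+(g)$ and hence equality. The identity $S^-(g)=CIE^-(g)$ is the mirror image.

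\emph{The quasiconvex envelope operator.} Continuity of $S^0(g) = \max\{S^+(g),S^-(g)\}$ follows from the previous step, and $S^0(g)$ is bounded below by $\min_I g$, so it attains a global minimum on $[a,b]$ and Proposition~\ref{propDownUp} applies once we know $S^0(g)$ is down-up. The key point is that $S^-(g) - S^+(g)$ is the difference of a decreasing and an increasing function, hence decreasing, so $\{x \in I : S^-(g)(x) \ge S^+(g)(x)\}$ is an initial subinterval $[a,c]$ of $I$ (possibly empty or all of $I$); on $[a,c]$ we have $S^0(g) = S^-(g)$, decreasing, and on $[c,b]$ we have $S^0(g) = S^+(g)$, increasing. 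Thus $S^0(g)$ is down-up and, by Proposition~\ref{propDownUp}, quasiconvex; it also satisfies $S^0(g) \le g$ since $S^\pm(g) \le g$. For maximality, let $v$ be quasiconvex with $v \le g$; by Proposition~\ref{propDownUp}, $v$ is down-up with a global minimizer $z^\ast$, so for $x \le z^\ast$ the function $v$ is decreasing on $[a,z^\ast]$, giving $v(x) \le v(y) \le g(y)$ for all $y \le x$ and hence $v(x) \le S^-(g)(x) \le S^0(g)(x)$, and symmetrically $v(x) \le S^+(g)(x) \le S^0(g)(x)$ for $x \ge z^\ast$. Taking the supremum over all such $v$ yields $QCE(g) \le S^0(g)$, and since $S^0(g)$ is itself quasiconvex and bounded above by $g$ we conclude $S^0(g) = QCE(g)$.

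\emph{Main obstacle.} The estimates are routine; the one place needing care is the down-up argument for $S^0(g)$, since a maximum of monotone functions need not be monotone, and one must observe that the maximum of a decreasing and an increasing function lands exactly in the down-up class that Proposition~\ref{propDownUp} identifies with quasiconvexity. A secondary point worth recording is consistency with Proposition~\ref{prop:QCE1d}: from $S^-(g)(a) = g(a)$ and $S^+(g)(a) = \min_I g$ one gets $S^0(g)(a)=g(a)$ and likewise $S^0(g)(b)=g(b)$, while evaluating $S^\pm(g)$ at any minimizer of $g$ shows $S^0(g) \equiv \min_I g$ on the interval spanned by the extreme minimizers, so the solution formula indeed reproduces the boundary and interior data in \eqref{Dirichlet1da}.
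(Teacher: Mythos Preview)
Your proof is correct, and it is somewhat different in organization from the paper's. The paper disposes of the monotonicity of $S^\pm$ and the envelope identities $S^\pm = CIE^\pm$ in one sentence (``clear from the definition'') and does not address continuity; you fill all of these in explicitly, which is an improvement. For the main identity $S^0(g)=QCE(g)$, the paper argues by a contact-set lemma: whenever $u(y)<g(y)$ it locates an interval $c<y<d$ on which $u$ is constant with $u(c)=g(c)$ and $u(d)=g(d)$, and then derives a contradiction with quasiconvexity of any competitor $v$ that would exceed $u$ at $y$. You instead use Proposition~\ref{propDownUp} symmetrically: first to certify that $S^0(g)$ is quasiconvex via the nice observation that $S^-(g)-S^+(g)$ is decreasing (so the max is automatically down-up), and second to bound any quasiconvex competitor $v\le g$ by $S^\pm(g)$ on each side of its own minimizer $z^\ast$. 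Your route avoids the auxiliary contact-set claim and makes the role of the down-up characterization transparent on both sides of the inequality; the paper's route, on the other hand, gives slightly more structural information about where $S^0(g)$ detaches from $g$, which is what motivates the sweeping algorithm.
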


\begin{proof}
The first statements are clear from the definition. 

It is clear from the definition of $u$, that $u$ is non-increasing for $x \leq x^*$ and  nondecreasing for $x \geq x^*$ so by Prop~\ref{propDownUp}  above, $u$ is quasiconvex. It is also clear from the definition that $u \leq g$ and that $u(a) = g(a)$ and $u(b) = g(b)$.  

Next we claim that if $u(y) < g(y)$ then there is an interval $c < y < d$ where $u$ is constant on $[c,d]$ with $u = g$ at $c,d$, as in the bumps on Figure~\ref{fig:CEandQC1d}.  To prove the claim, first suppose that $x^* < y$.  Then $u(y) = u^+(y)$ and $u^+(y) = g(d)$ for some $d>y$.   Then by continuity of $g$, since $x^* < y$, there is some point $c < y$ where $g(c) = g(d)$.  Next, suppose that $x^* > y$.  Then a similar argument applied to $u^-$ gives the claim.  In addition, it is not possible that $y = x^*$, since $u(x^*) = g(x^*)$. 

Now we show that $u = QCE(g)$. 
Suppose for contradiction, that it is not.  This means that there exists a quasiconvex function $v(x)$,  with $v \leq g$, so that $v(y) > u(y)$ for some $y \in [a,b]$.   Clearly, $u(y) < g(y)$.  
Applying the previous claim, there are points $c < y< d$ where $g(c) = g(d) = u(y)$.  Since we assumed $v \leq g$ we have $v(c) \leq g(c) = u(c)$ and $v(d) \leq g(d) = u(d)$.  But $v(y) > u(y)$ contradicts the assumption that $v$ is QC.   So $u$ is indeed the QC envelope. 
\end{proof}

\begin{definition}[Comparison Principle]
Let $S:C(I) \to C(I)$ be a map from continuous functions to continuous functions.  For $u,v \in C(I)$, write 
\[
u \leq v \text{ if }  u(x) \leq v(x), \text{ for all $x\in I$}.  
\]
The comparison principle holds for $S$ if 
\[
u \leq v \implies  S(u) \leq S(v).
\]	
The map $S$ is non-increasing if 
\[
S(u) \leq u
\]
\end{definition}

\begin{proposition}\label{Comparison1d}
The comparison principle holds for $S^+, S^-$ and $S^0$, and they are non-increasing maps. In particular, 
\[
	u \leq v 
	\implies
	QCE(u) \leq QCE(v) 
	\]
\end{proposition}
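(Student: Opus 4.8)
The plan is to work directly with the explicit solution formulas \eqref{Sdefn}, since the preceding proposition identifies $QCE(g) = S^0(g)$, and to exploit the fact that both pointwise $\min$ and pointwise $\max$ are order-preserving operations.

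First I would treat $S^+$. Fix $x \in I$ and suppose $u \le v$. Because the minimum defining $S^+(v)(x)$ is taken over the compact interval $[x,b]$ and $v$ is continuous, it is attained at some point $y^* \ge x$. Then
\[
S^+(u)(x) \le u(y^*) \le v(y^*) = S^+(v)(x),
\]
where the first inequality is by the definition of $S^+(u)(x)$ as a minimum over a set containing $y^*$, and the second is the hypothesis $u \le v$. Since $x$ was arbitrary, $S^+(u) \le S^+(v)$. The non-increasing property is immediate: $x$ itself lies in $[x,b]$, so $S^+(u)(x) = \min_{y\ge x} u(y) \le u(x)$. The argument for $S^-$ is identical, with $[x,b]$ replaced by $[a,x]$.

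For $S^0 = \max\{S^+, S^-\}$, the comparison principle follows by monotonicity of $\max$: from $S^+(u) \le S^+(v)$ and $S^-(u) \le S^-(v)$ we get
\[
S^0(u)(x) = \max\{S^+(u)(x), S^-(u)(x)\} \le \max\{S^+(v)(x), S^-(v)(x)\} = S^0(v)(x).
\]
Likewise $S^0(u)(x) \le \max\{u(x),u(x)\} = u(x)$, using the non-increasing property of $S^\pm$. Finally, since $QCE(u) = S^0(u)$ and $QCE(v) = S^0(v)$ by the previous proposition, the displayed implication $u \le v \implies QCE(u) \le QCE(v)$ follows at once.

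There is no substantial obstacle here: the whole argument is an unwinding of definitions together with the order-preservation of $\min$ and $\max$. The only point needing a word of care is that the minima in \eqref{Sdefn} are genuinely attained, which holds because $u$ and $v$ are continuous on the compact interval $[a,b]$, hence on each closed subinterval $[a,x]$ and $[x,b]$.
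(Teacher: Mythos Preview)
Your proof is correct and follows essentially the same approach as the paper: both arguments work directly from the explicit formulas \eqref{Sdefn} and use the order-preservation of $\min$ and $\max$ to obtain comparison for $S^+$, $S^-$, and then $S^0$. Your version is in fact slightly more complete, since you also spell out the non-increasing property (which the paper's proof does not explicitly verify despite being part of the statement) and note why the minima are attained.
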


\begin{proof}
	This follows from the explicit formulas \eqref{Sdefn}.   In particular, if $u\leq v$ then $\min_{ y \geq x } u(y) \leq \min_{ y \geq x } v(y)$ since the comparison holds at each point.  So comparison holds for $S^+$.  Similarly comparison holds for $S^-$.   Next, since comparison holds for $S^+$ and $S^-$,  $u \leq v$ implies that  $S^+(u)(x) \leq S^+(v)(x)$ and also $S^-(u)(x) \leq S^-(v)(x)$  so 
	\[
	\max \{ S^+(u)(x), S^-(u)(x) \} \leq  \max \{ S^+(v)(x), S^-(v)(x) \}
	\] which shows comparison for $S^0$.
\end{proof}

\subsection{A fast marching/sweeping method}
We can implement the one dimensional  QC-envelope on a line segment discretized by the points $x_0 < x_1 < \dots < x_N$ using a fast sweeping method \cite{zhao2005fast} or a fast marching method \cite{sethian1999fast}.

Set $u^-(x_0) = g(x_0)$ and define inductively (sweeping from left to right)
\[
u^-(x_{j+1}) = \min\left\{ u^-(x_j), g(x_{j+1})\right \},
\quad j = 0,\dots, N-1.
\]
Similarly, set $u^+(x_N) = g(x_N)$ and define inductively (sweeping from right to left)
\[
u^+(x_{j-1}) = \min\left\{ u^+(x_j), g(x_{j-1})\right \},
\quad j = N,\dots, 1
\]
Then  define $u(x_j) = \max \{ u^+(x_j), u^-(x_j) \}$.  Refer to Figure (\ref{fig:QC1d}) for a visualization.
\begin{remark}
We can also make this a fast marching method, if we first find the minimum point, and then march in towards it.  This reduces the cost in half, once the minimum point is found. 	
\end{remark}

\begin{figure}[t]
\centering
	\includegraphics[width=.45\textwidth]{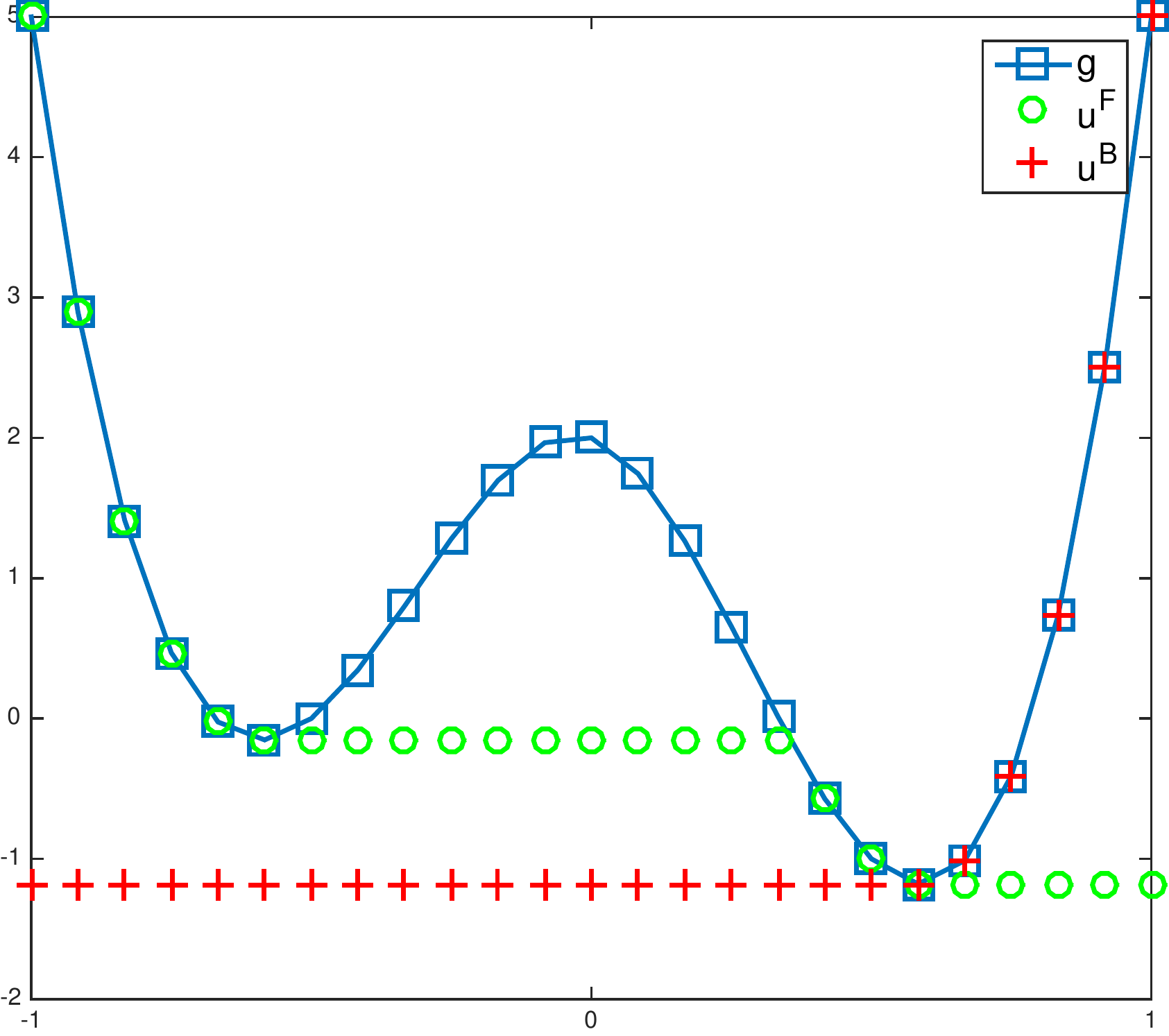}\hspace{.5cm}
	\includegraphics[width=.45\textwidth]{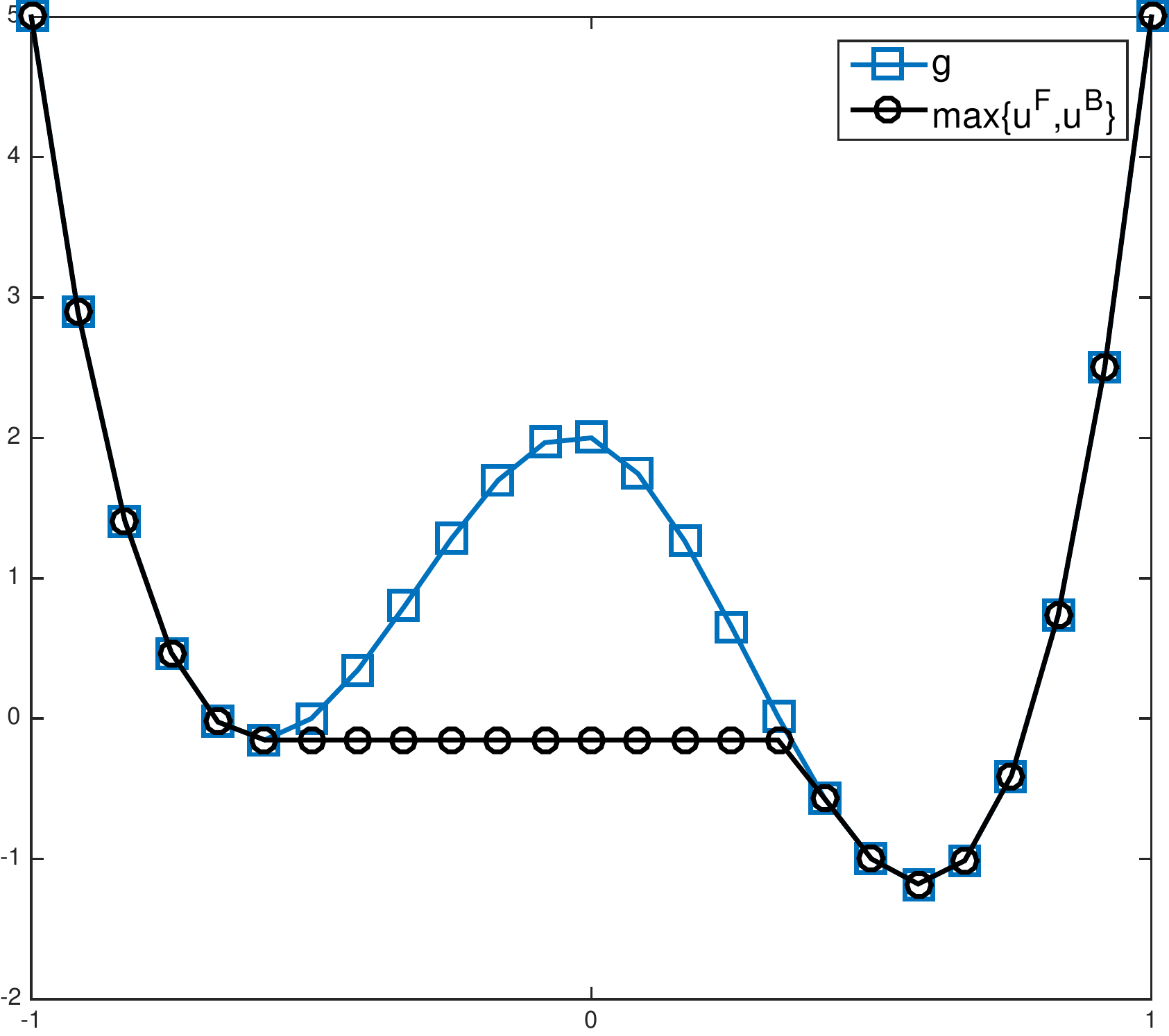}
\caption{Left: Forward and backward sweep give the increasing/decreasing envelopes of $g$. The quasiconvex envelope is the maximum of the two. }
\label{fig:QC1d}
\end{figure}

\section{Robust quasiconvexity}\label{sec:RobustQC}

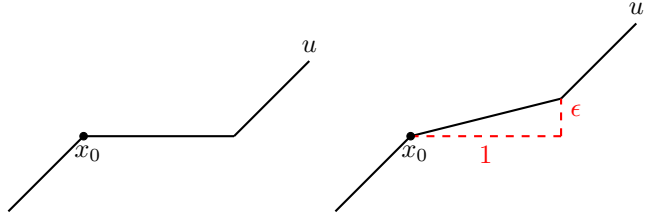
\begin{figure}[t]
\begin{tikzpicture}
\draw [black,thick] (0,0) -- (1,1);
\draw [black,thick] (1,1) -- (3,1);
\draw [black,thick] (3,1) -- (4,2);
\node [above,black] at (4,2) {$u$};
\draw[fill] (1,1) circle [radius=0.05];
\node [below,black] at (1,1) {$\ x_0$};
\end{tikzpicture}
~
\begin{tikzpicture}
\draw [black,thick] (0,0) -- (1,1);
\draw [black,thick] (1,1) -- (3,1.5);
\draw [black,thick] (3,1.5) -- (4,2.5);
\draw [red,thick,dashed] (1,1) -- (3,1);
\draw [red,thick,dashed] (3,1) -- (3,1.5);
\node [below, red] at (2,1) {$1$};
\node [above right, red] at (3,1.15) {$\epsilon$};
\node [above,black] at (4,2.5) {$u$};
\draw[fill] (1,1) circle [radius=0.05];
\node [below,black] at (1,1) {$\ x_0$};
\end{tikzpicture}
\caption{Robust quasiconvexity. The function on the left is not $\e$-robustly quasiconvex. The function on the right is. }
\label{viscosityinterpretation}
\end{figure}

Robustly-QC functions are stable under perturbations by linear functions with bounded slopes, which is not the case for QC functions.  For example, the function $u(x)$ on the left in \autoref{viscosityinterpretation} is QC, but $u(x) - \e x$ is not, for arbitrarily small $\e$.  Robustly-QC functions have the advantage over QC functions, that they can be characterized in the viscosity sense by a PDE operator~\cite{barron2012functions}.

\begin{definition}[$\e$-robustly QC]
	The function $u \in C(\Rn)$ is $\e$-robustly quasiconvex if $v_p(x) = u(x) + p\cdot x$ is QC for every $\abs{p} \leq \e$. 		
\end{definition}

\begin{lemma}
	The function $u \in C(\Rn)$ is $\e$-robustly quasiconvex if an only if it is $\e$-robustly QC on every line.
\end{lemma}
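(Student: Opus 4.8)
The plan is to reduce the statement to Proposition~\ref{dirQC}, which already characterizes ordinary quasiconvexity via restriction to lines. The key observation is that the $\e$-robust quasiconvexity of $u$ is a statement about the whole \emph{family} $\{v_p = u + p\cdot x : |p| \le \e\}$ of linear perturbations, so I would like to exchange the two quantifiers: "$u$ is $\e$-robustly QC on every line" should mean "for every line $\ell$ and every $|p| \le \e$, the restriction $v_p|_\ell$ is QC," and "$u$ is $\e$-robustly QC" means "for every $|p| \le \e$, $v_p$ is QC on $\Rn$." By Proposition~\ref{dirQC}, $v_p$ is QC on $\Rn$ iff $v_p$ is QC on every line, so after swapping the order of the "for all lines" and "for all $p$" quantifiers the two conditions coincide. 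The forward direction (robustly QC $\Rightarrow$ robustly QC on every line) is immediate: restricting a QC function to a line keeps it QC.

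The one point that needs a little care — and what I expect to be the main (though still mild) obstacle — is the definition of "$\e$-robustly QC on a line." A line $\ell = \{x_0 + tv : t \in \R\}$ carries its own one-dimensional coordinate $t$, and the natural notion is that $t \mapsto u(x_0 + tv) + q t$ is QC for all $|q|$ up to some threshold. I would need to check that the restriction of the $n$-dimensional perturbation $v_p(x_0 + tv) = u(x_0+tv) + (p\cdot x_0) + t\,(p\cdot v)$ differs from $t\mapsto u(x_0+tv)$ by the affine function $t \mapsto (p\cdot x_0) + t(p\cdot v)$; adding a constant does not affect quasiconvexity, so this is QC iff $t\mapsto u(x_0+tv) + t(p\cdot v)$ is QC. Thus the one-dimensional slopes that arise as $p$ ranges over $|p|\le \e$ are exactly the numbers $p\cdot v$, which (for $v$ a unit vector, taking $p = qv$) sweep out $|q|\le\e$. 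So the appropriate line-wise notion is: $u$ is $\e$-robustly QC on $\ell$ (with $v$ a unit direction) iff $t\mapsto u(x_0+tv)+qt$ is QC for all $|q|\le\e$, and this is consistent with the $n$-dimensional definition.

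Concretely I would carry out the proof in these steps. First, note $u$ is $\e$-robustly QC iff $v_p$ is QC on $\Rn$ for all $|p|\le\e$, and by Proposition~\ref{dirQC} this holds iff for every $|p|\le\e$ and every line $\ell$, $v_p|_\ell$ is QC. Second, observe that swapping the quantifiers gives: for every line $\ell$ and every $|p|\le\e$, $v_p|_\ell$ is QC. Third, identify $v_p|_\ell$, up to an additive constant, with the one-dimensional perturbation of $u|_\ell$ by the linear function of slope $p\cdot v$ (for $v$ a fixed unit direction on $\ell$), and note that as $p$ ranges over the ball $|p|\le\e$ the quantity $p\cdot v$ ranges over $[-\e,\e]$ — with no loss, since any component of $p$ orthogonal to $v$ only changes the additive constant. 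Hence the condition "for all $|p|\le\e$, $v_p|_\ell$ QC" is exactly the condition "$u$ is $\e$-robustly QC on $\ell$." Assembling the equivalences gives the lemma. The converse direction is then just the chain of "iff"s read backwards, and the only subtlety worth a sentence in the writeup is the reduction from perturbations $p\in\Rn$ on a line to scalar perturbations, handled by the orthogonal-decomposition remark.
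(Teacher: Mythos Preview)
Your proposal is correct and follows essentially the same route as the paper: reduce to Proposition~\ref{dirQC}, and note that on a line $x = x_0 + tv$ (with $|v|=1$) the restriction of $v_p$ is, up to an irrelevant additive constant, the one-dimensional perturbation of $u|_\ell$ by a linear function of slope $p\cdot v$, with $|p\cdot v|\le \e$ by Cauchy--Schwarz. Your orthogonal-decomposition remark (taking $p=qv$ to realize every slope $|q|\le\e$) makes the forward identification slightly more explicit than the paper does, but the argument is the same.
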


\begin{proof}
	Suppose $u \in C(\Rn)$ is $\e$-robustly quasiconvex.  Then for each $\abs{p} \leq \e$, $v_p$ is QC, and by Prop~\ref{dirQC}, $v_p$ is QC on every line.   
	
	Next suppose $u$ is $\e$-robustly QC on every line.   For each $\abs{p} \leq \e$, consider $v_p$.  When we restrict $v_p$ to the line $x = d t + x_0$, we obtain the function $u(dt + x_0) + p\cdot(dt + x_0)$.   This function is a perturbation of $u$ restricted to the line by $p\cdot d t + p\cdot x_0$ which has slope $\abs{p\cdot d} \leq \e$, since $\abs{p}\leq \e$ and $\abs{d} \leq 1$.   So the restriction is QC.   Since the restriction of $v_p$ to lines is QC for every line, and this holds for all admissible $p$, $u$ is $\e$-robustly QC. 
\end{proof}

The ideas of \S\ref{sec:QC1d} generalize naturally to this case.  In particular, we can define increasing and decreasing functions which grow by at least $\e$.   We can also define the resulting envelopes.  

The nonlocal PDE for the $\e$-robust envelope becomes
\begin{equation}
\max\{u(x)-g(x), \axu u_x(x) + \e \} = 0, 
\quad x \in (a,b)\setminus I_g
\end{equation}
along with \eqref{Dirichlet1da}.

The solution method becomes, on a grid of spacing $h$, 
\begin{align}
u^-(x_{j+1}) &= \min\left\{ u^-(x_j)-\e h, g(x_{j+1})\right \},
\quad j = 0,\dots, N-1.
\\
u^+(x_{j-1}) &= \min\left\{ u^+(x_j) + \e h, g(x_{j-1})\right \},
\quad j = N,\dots, 1
\end{align}
\begin{remark}
For the QCE, it was less important to enforce the Dirichlet boundary conditions on $I_g$, since enforcing it at one point of $I_g$ is enough.  However for the robust QCE, we need to make sure that the solution does not decrease below $g_m$. 
\end{remark}

\section{QCE in higher dimensions}\label{sec:HigherDim}
\subsection{Directionally Quasiconvex Functions}
Now we consider quasiconvex functions defined on $\Rn$ with $n> 1$.   We introduce a notion of quasiconvexity with respect to a direction set.  We recover the usual definition of quasiconvexity when the direction set becomes all directions. 

\begin{definition}
Let  $\Dir$ be a set of unit vectors in $\R^n$, we call is a direction set.   
The continuous function $u: \Rn \to \R$ is $\Dir$-QC (directionally QC) if
\begin{equation} \label{d.qc}
u(\lambda x + (1-\lambda) y) \le \max\{u(x),u(y)\}, 
\quad \text{ for all $0\le \lambda \le 1$, and all ${x-y} \in  \Dir$}.
\end{equation}
The $\Dir$-quasiconvex envelope of a given function $g$ is defined as the pointwise supremum of all $\Dir$-convex functions which are majorized by~$g$,
\bq\label{DCEdefn}
QC^{\Dir}(g)(x) = \sup\{  v(x) \mid  v(y) \leq g(y) \text{ for all } y, \quad v \text{ is $\Dir$-QC} \}.
\eq	
\end{definition}
\begin{remark}
	As is the case for QC function, $\Dir$-QC functions are closed the maximum operation.  Suppose $u,v$ are $\Dir$-quasiconvex functions.  Then so is $w(x) = \max(u(x),v(x))$.  Thus $u = QC(g)$ exists and it is unique, and it is $\Dir$-quasiconvex.
\end{remark}

\subsection{Approximate quasiconvexity}
In this section we want to study how far away from quasiconvex a directionally convex function can be.

Let $\Dir$  be a set of direction vectors. Define the \emph{directional resolution} 
\bq\label{eq:dir_res}
d\theta \equiv \max_{\abs{w}=1}\min_i\cos^{-1}(w^Td), \quad d \in \Dir 
\eq
to be the largest angle an arbitrary vector can make with any vector in $\Dir$. In two dimensions it is easy to see that $d\theta$ is simply half the maximum angle between any two vectors in $\Dir$.

Then a $\Dir$-QC function can have level sets with negative curvature, of size $\bO(d\theta)$. 

Formally speaking, consider the locally quadratic function, choosing coordinates so that $\grad u = (p,0)$, with $p > 0$, 
the we can write
\[
u(x,y) = px + ax^2/2 + bxy + cy^2/2
\] 
the curvature of the zero sublevel set is $c$ (meaning convex when $c \geq 0$).   
An elementary calculation shows that  
\[
c \geq - \abs{p + b} d\theta - ad\theta^2/2
\]

This calculation can be made more precise by consider QC-PDE operator from \cite{barron2013uniqueness}, and using viscosity solution techniques, as in \cite{carlier2012}.

\subsection{Enforcing quasiconvexity in a single direction}
The PDEs and solvers from the previous section can be extended to higher dimensions so that we can find increasing, decreasing, envelopes for an direction $d$.  This also allows us to find the envelope function which is QC \emph{in a single given direction $d$}.  By iterating this process we over different directions, we hope to find $QCE(g)$.

\begin{proposition}\label{prop:QCEd2}
Let $\Omega$ be a bounded convex domain in $\Rn$ with boundary $\partial \Omega$. 
Suppose $g: \Omega \subset \Rn \to \R$ is continuous. Let $d$ be a given unit vector in $\Rn$. 
The increasing and decreasing envelopes, $u^+, u^-$ of $g$ are viscosity solutions of 
\begin{align}
	\max\{u^+(x)-g(x),  d\cdot \grad u^+(x)\} = 0
	\\
	\max\{u^-(x)-g(x), -d\cdot \grad u^-_x(x)\} = 0
\end{align}
respectively, along with the boundary conditions
\begin{equation}\label{Dirichlet2d}
	u = g \text{ on } \partial \Omega
\end{equation}
The $d$-quasiconvex envelope of $g$, $u = QCE^d(g)$ is given by
\[
u(x) = \max\{ u^+(x), u^-(x)\}
\]
it is a viscosity solution of the nonlocal equation 
\begin{equation}\label{1dobs}
\max\{u(x)-g(x), (x^*(d,x)-x) \cdot \grad u(x)\} = 0
\end{equation}
where 
\[
x^*(d,x) \in \argmin \{ u(y) \mid y \in \Omega, y = x+td \}
\]
along with \eqref{Dirichlet2d}.
\end{proposition}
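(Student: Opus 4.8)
The plan is to reduce the $n$-dimensional $d$-quasiconvex envelope to the one-dimensional theory of Section~\ref{sec:QC1d}, applied line by line along the direction $d$. First I would foliate $\Omega$ into the family of line segments $\ell_z = \{ z + t d \mid t \in \R \} \cap \Omega$, indexed by $z$ ranging over a hyperplane transverse to $d$; since $\Omega$ is convex and bounded, each $\ell_z$ is a (possibly empty) bounded interval $[a_z, b_z]$ whose endpoints lie on $\partial\Omega$. The key observation is that a function $u$ is $\{d\}$-QC (in the sense of \eqref{d.qc} with $\Dir = \{d\}$) exactly when its restriction to every $\ell_z$ is quasiconvex in the one-dimensional sense, so that by Proposition~\ref{prop:QCE1d} and the explicit solution formula \eqref{Sdefn}, $QCE^d(g)$ restricted to $\ell_z$ must equal $S^0(g|_{\ell_z})$. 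Parametrising $\ell_z$ by $t$ and noting that $S^+$ and $S^-$ are the increasing/decreasing envelopes along the line, this gives $u^\pm$ as claimed, with $u^\pm = g$ forced at the endpoints $a_z, b_z$, i.e.\ on $\partial\Omega$, which is precisely \eqref{Dirichlet2d}.

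Next I would verify that the gluing of these one-dimensional solutions over all lines is consistent, i.e.\ that $u(x) = \max\{u^+(x), u^-(x)\}$ is genuinely the pointwise supremum in \eqref{DCEdefn} with $\Dir = \{d\}$. For the upper bound: any $\{d\}$-QC competitor $v \le g$ restricts on each $\ell_z$ to a 1D quasiconvex function below $g|_{\ell_z}$, hence is $\le S^0(g|_{\ell_z}) = u|_{\ell_z}$; so $v \le u$ everywhere. For the lower bound: $u$ itself is $\le g$, and its restriction to each line is down-up by the last Proposition of \S\ref{sec:QC1d}, hence quasiconvex on each line, hence $\{d\}$-QC. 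Therefore $u = QCE^d(g)$. The directional derivative characterisation of $u^\pm$ then transfers directly: along $\ell_z$, the PDE $\max\{u^+ - g, \partial_t u^+\} = 0$ from Proposition~\ref{prop:QCE1d} is exactly $\max\{u^+ - g, d\cdot\grad u^+\} = 0$ interpreted in the viscosity sense, since test functions in $\Rn$ restrict to 1D test functions along the line and conversely any 1D test function extends (constantly in the transverse directions) to an $\Rn$ test function; one has to check this equivalence of viscosity notions, which is routine because only the $d$-directional derivative appears.

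Finally, for the nonlocal formulation \eqref{1dobs}: on each line $\ell_z$, let $x^*(d,x)$ be a minimiser of $u$ along that line through $x$. By the down-up structure of $u$ on $\ell_z$, to the "uphill'' side of $x^*$ we have $u = u^+$ with $d\cdot\grad u^+ \ge 0$ and $x^* - x$ antiparallel to the uphill direction, while on the "downhill'' side $u = u^-$; in both cases the sign of $(x^*(d,x)-x)\cdot\grad u(x)$ matches $\axu u'(x)$ from the 1D equation \eqref{1dobs}, so the nonlocal obstacle equation holds in the viscosity sense along each line, hence on $\Omega$. The main obstacle I anticipate is the careful handling of the viscosity-sense statements at the transition point $x^*$ and at points where the minimiser set along a line is a nondegenerate interval (the flat region where $\axu = 0$): there one must argue, as in the proof of Proposition~\ref{firstorderQC}, that $u$ is locally constant along $d$ and that the boundary/Dirichlet condition on $I_g$-type sets is what pins down the solution; the measure-zero set of lines meeting $\partial\Omega$ tangentially also needs a remark, though continuity of $u$ handles it.
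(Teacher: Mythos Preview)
Your proposal is correct and follows exactly the approach the paper intends: reduce to the one-dimensional theory by foliating $\Omega$ into line segments parallel to $d$ and applying Proposition~\ref{prop:QCE1d} and the explicit solution operators \eqref{Sdefn} on each line. In fact the paper does not give a standalone proof of Proposition~\ref{prop:QCEd2} at all; it simply remarks afterward that the PDEs are solved by the method of characteristics and that the solutions are the higher-dimensional analogues of the one-dimensional operators, so your write-up is already more detailed than what appears in the paper.
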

\begin{remark}
	Here the Dirichlet boundary conditions are to be understood in the viscosity sense.  In particular, the functions $u^+, u^-$  will not be continuous up to the boundary, as in the one dimensional case.  However they can be continuous in the direction of the characteristics.  
	For $QCE^d$, we take the minimum value along the line through $x$ with direction $d$, inside the domain. 
\end{remark}

\begin{remark}
	These PDEs can be solved explicitly using the method of characteristics.  The solutions are just the higher dimensional generalizations of the one dimensional operators given in \eqref{Sdefn}.  The same properties: comparison, decreasing, hold for the higher dimensional operators. 
\end{remark}

\subsection{Convergence of an iterative line solver in higher dimensions}
Now, in order to approximate the QC envelope, we use Proposition~\ref{prop:QCEd2}.  
We first choose a direction set $\Dir$, then for each direction $d$ we apply the result, solving using the method of characteristics on the grid. 
This is done by fast marching, or fast sweeping.   Since the characteristics are parallel straight lines,  the solution is computed in one sweep.    This can be done for any vector $d$, we are not restricted to grid direction vectors.  Then we iterate over a list of vectors $d$.  Define $S^\Dir$.  To be the solution operator for the full list of directions.   Then $S^\Dir$ satisfies the comparison principle, and it is decreasing. 

Denoting by $u^n$ the iterates, with $u^0 = g$
\[
u^n = S^\Dir(u^{n-1})
\] 
Write $u = QCE^\Dir(g)$.  Then $u  \leq u^0 = g \leq g$. By comparison, this means that 
\[
u \leq u^n\leq g
\]
Furthermore, the iterations are decreasing, in the sense that $u^{n+1} \leq u^n$.   
Moreover, we can establish convergence as in the proof of \cite[Theorem 3.1]{barron2012quasiconvex}: define the function $W(x) = \lim_{n\to \infty}u^n(x)$.  Then $u \leq W \leq g$.  Also, $W$ is $\Dir$-quasiconvex: if not, we can find points where the condition fails, and decrease $W$ further.   Then, since $u$ is the largest $\Dir$-QC function below $g$, we have $W = u$.

\subsection{Robust QC in higher dimensions}
The generalization of the robust QC to higher dimensions in analogously, using the line solver. 
\begin{remark}
	We can generalize further, replacing the requirement that $\abs{u_x} \geq \e$ by $\abs{u_x} \geq f(x)$ or even $\abs{u_x} \geq f(x,d)$ where $d$ is the direction for the directional line solver.   The effect of this term could be to introduce additional convexity in spatially or even directionally dependent manner.
\end{remark}

\section{Numerical results}\label{sec:NumRes}
In this section we present numerical experiments which validate the arguments presented earlier. Recall that the numerical solver, in one dimension, recovers the envelope after one sweep in each direction. \autoref{fig:CEandQC1d} was generated using the one dimensional solver for the QCE and the robust-QCE. 

We present several examples of the extension of the line solver to two dimensions, and conclude with an example in three dimensions. In the contour plots, the solid line represents the level sets of the original function (obstacle) and the dashed line represents the same level sets of the numerical solution. The two-dimensional numerical solutions shown are visualized on a $64\times64$ grid (larger grid sizes make the plots harder to read. We apply the line solver iteratively until a steady state is reached (taking a tolerance of $10^{-6}$).  Moreover, the direction sets used for computation were given by rational slopes, 
\[
\Dir^W =\{v\in\mathbb{Z}^2\mid \norm{v}_\infty\le W\}\setminus\{(0,0)\}
\]
in which case we say the direction set has width $W$.  We also performed computations using equally spaced direction vectors.

We begin with examples where the non-convexities are aligned with the grid. In this case, only one sweep along each direction is required to find the solution.  In general, after 10 or less iterations of the line solver in each direction, the solution was found, with a stopping criterion of $10^{-6}$. 

\begin{example}[Grid-aligned cones]
\label{ex:gridalignedcones}
We also consider examples of the type:
\[
g^{\theta,\alpha}(x)=\max\left\{\sqrt{(x_1+a^\theta_1)^2 + (x_2+a^\theta_2)^2},\sqrt{(x_1+b^\theta_1)^2 + (x_2+b^\theta_2)^2}-\alpha\right\}
\]
where $R(\theta)$ is the counter-clockwise rotation matrix by angle $\theta$, $a^\theta = R(\theta)(0.5,0)^\intercal $, and $b^\theta = R(\theta)(-0.5,0)^\intercal $. The vertical translation of the second argument by $\alpha$ adds an additional non-convexity by imposing asymmetry about the plane $x=0$. For now, we consider the case $\theta=0$, so that the non-convexities lie along horizontal lines. Results are found in \autoref{fig:gridaligned}.


\end{example}

\begin{example}[Non-convex signed distance function]
\label{ex:pacman}
Consider a signed distance function $g$, to the PacMan shape (a circle with one quadrant removed).
See \autoref{fig:gridaligned}.
\end{example}

\begin{figure}
\includegraphics[width=.4\textwidth]{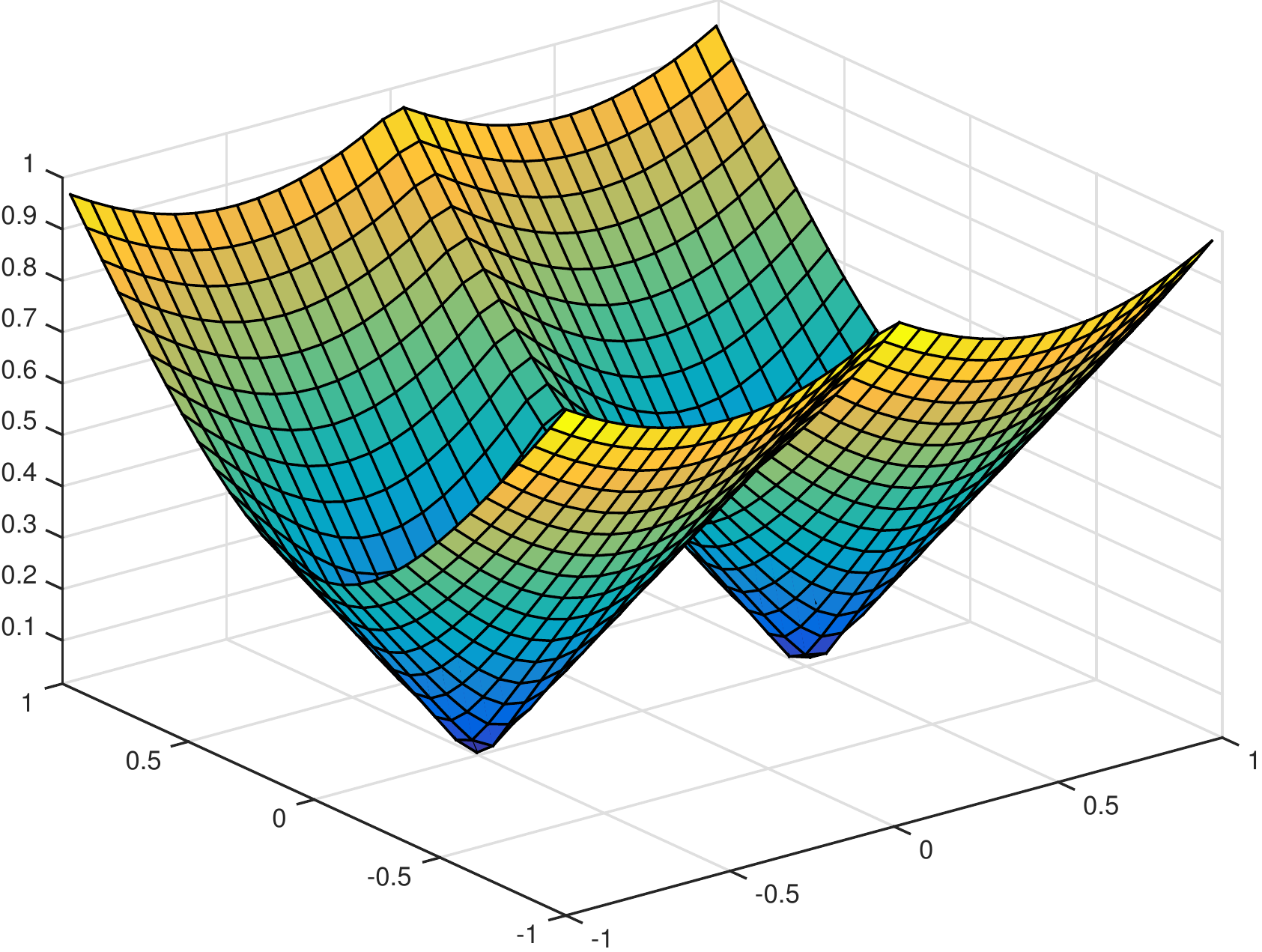}
\includegraphics[width=.4\textwidth]{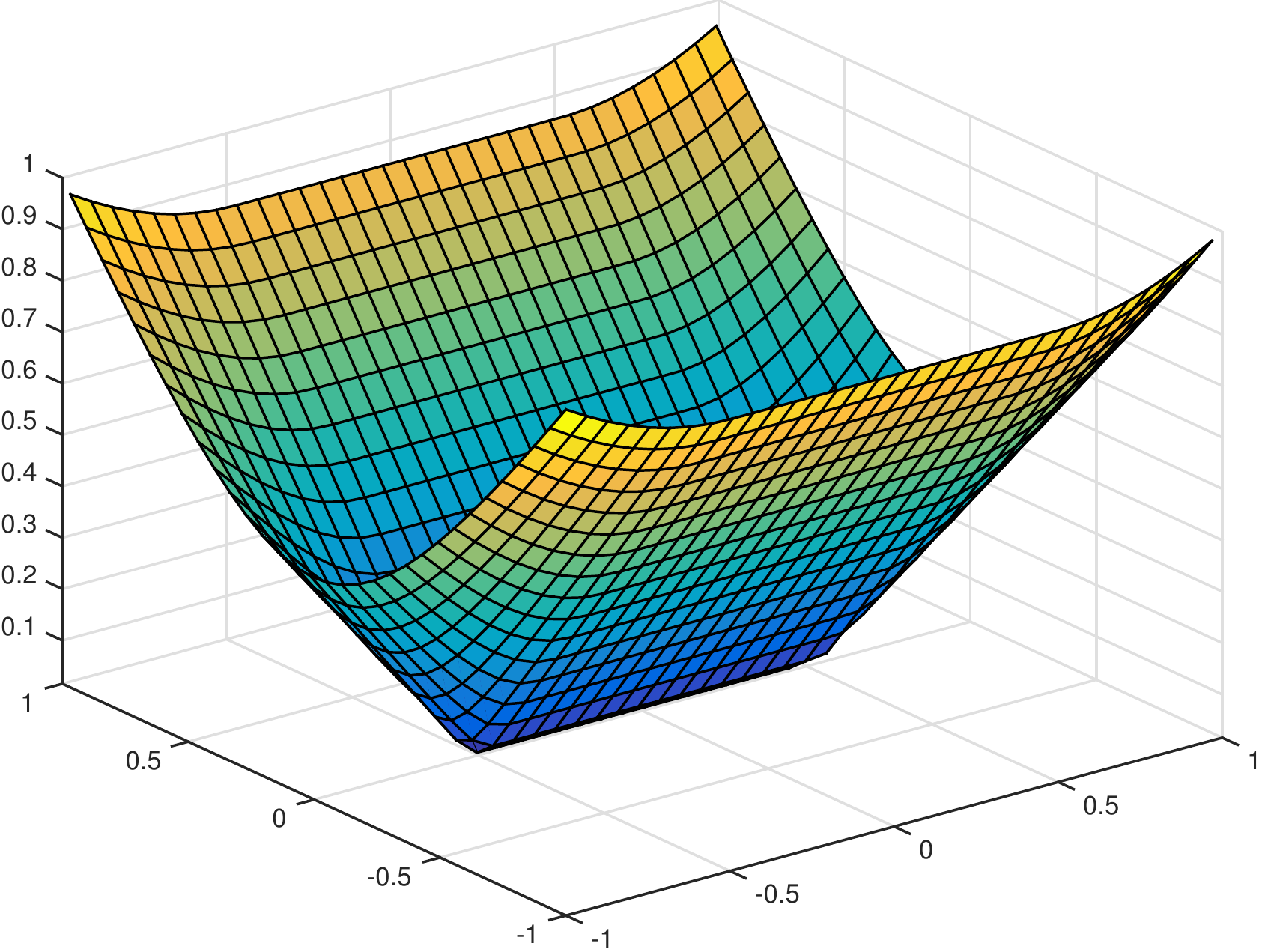}
\\
\vspace{1cm}
\includegraphics[width=.4\textwidth]{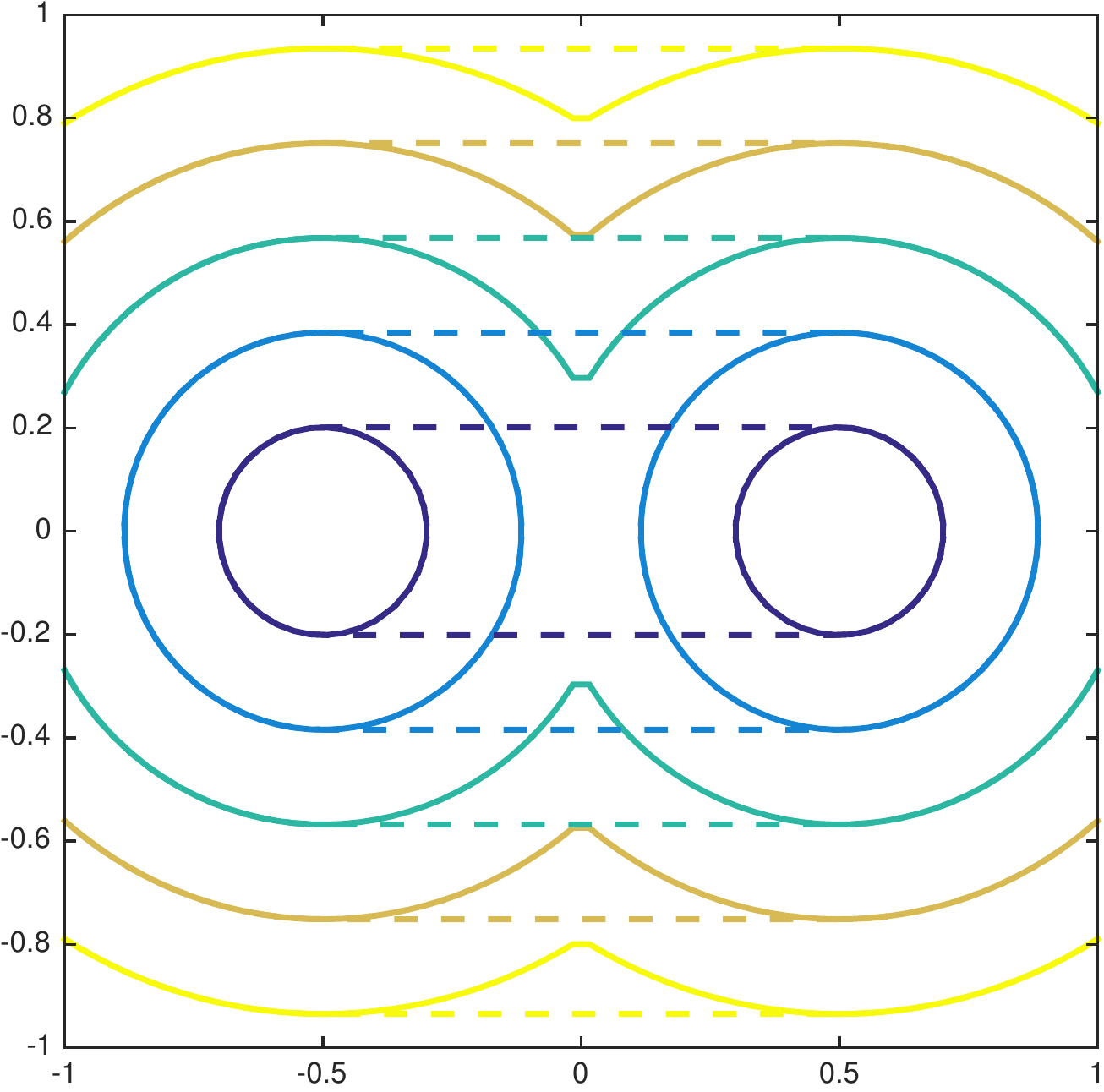}
\hspace{1cm}
\includegraphics[width=.4\textwidth]{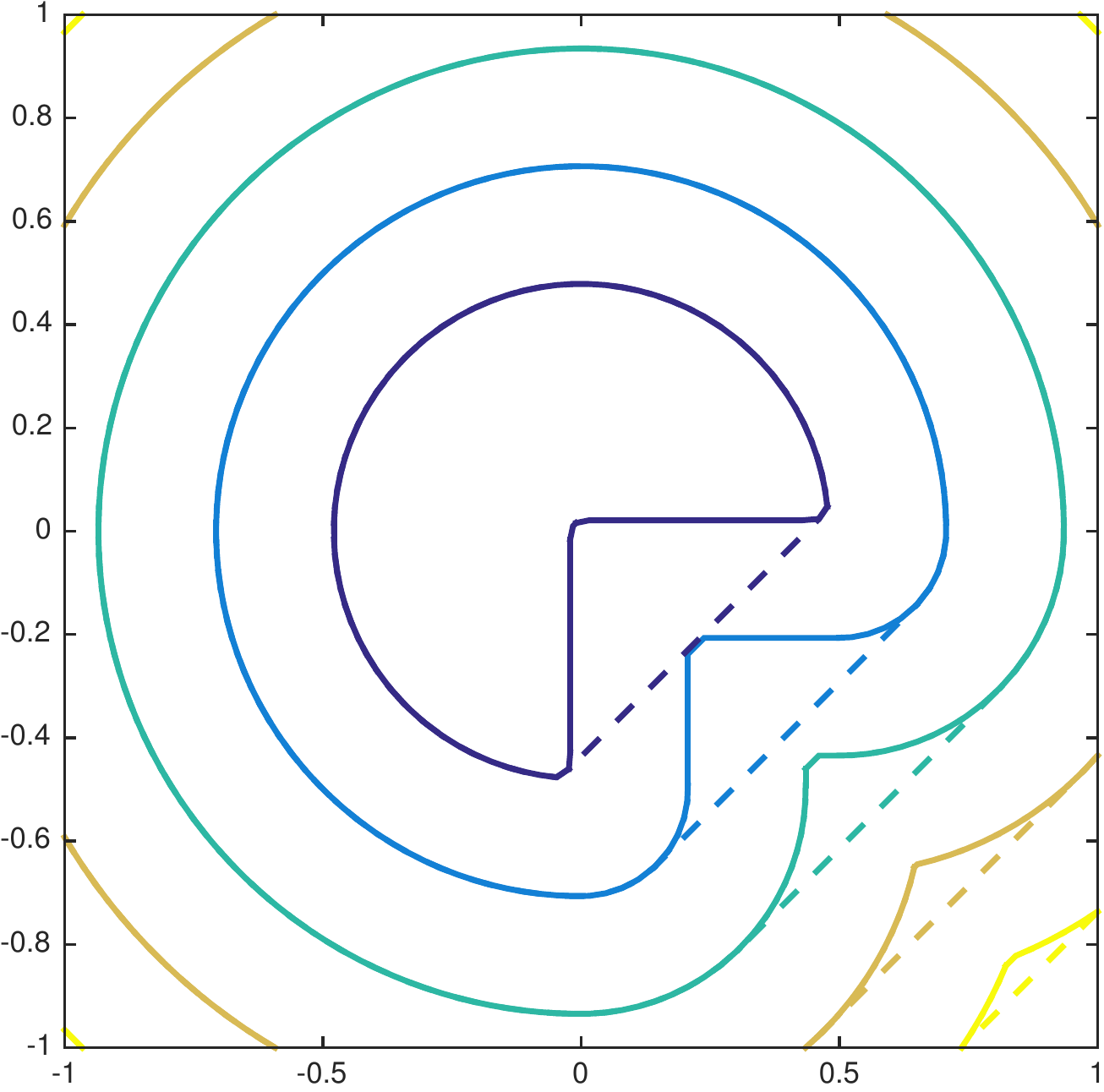}

\caption{Top: Example \ref{ex:gridalignedcones}: surface plot of $g$ at its quasiconvex envelope, $u$.
 Bottom Left: contour plot of $g$ (solid lines) superimposed with contour plot of $u$ (dashed lines).
 Bottom Right: contour plot for Example~\ref{ex:pacman}. 
}
\label{fig:gridaligned}
\end{figure}

%

\begin{example}[Non-grid aligned convexities]
\label{example_nonaligned}
We now consider the case where the non-convexities are not lined up with the grid, using the function $g^{\theta,\alpha}$ defined in Example \ref{ex:gridalignedcones}. 
We took  $\theta = \arctan(1/3)/2$ and $\alpha=0$. The rotations is the worst angle possible for the direction set of width 3.  As can be seen in \autoref{fig:nongridaligned}, the solution is visibly nonconvex. 
In practice, we can compute with more directions, but this example is for illustration. 
Next we used the $\e$-robust QCE to correct for the error in the previous example, with $\e=0.15$.  The level sets become convex, without visibly overcompensating. 
\begin{figure}[t]
\hspace{-2cm}
\includegraphics[width=.425\textwidth]{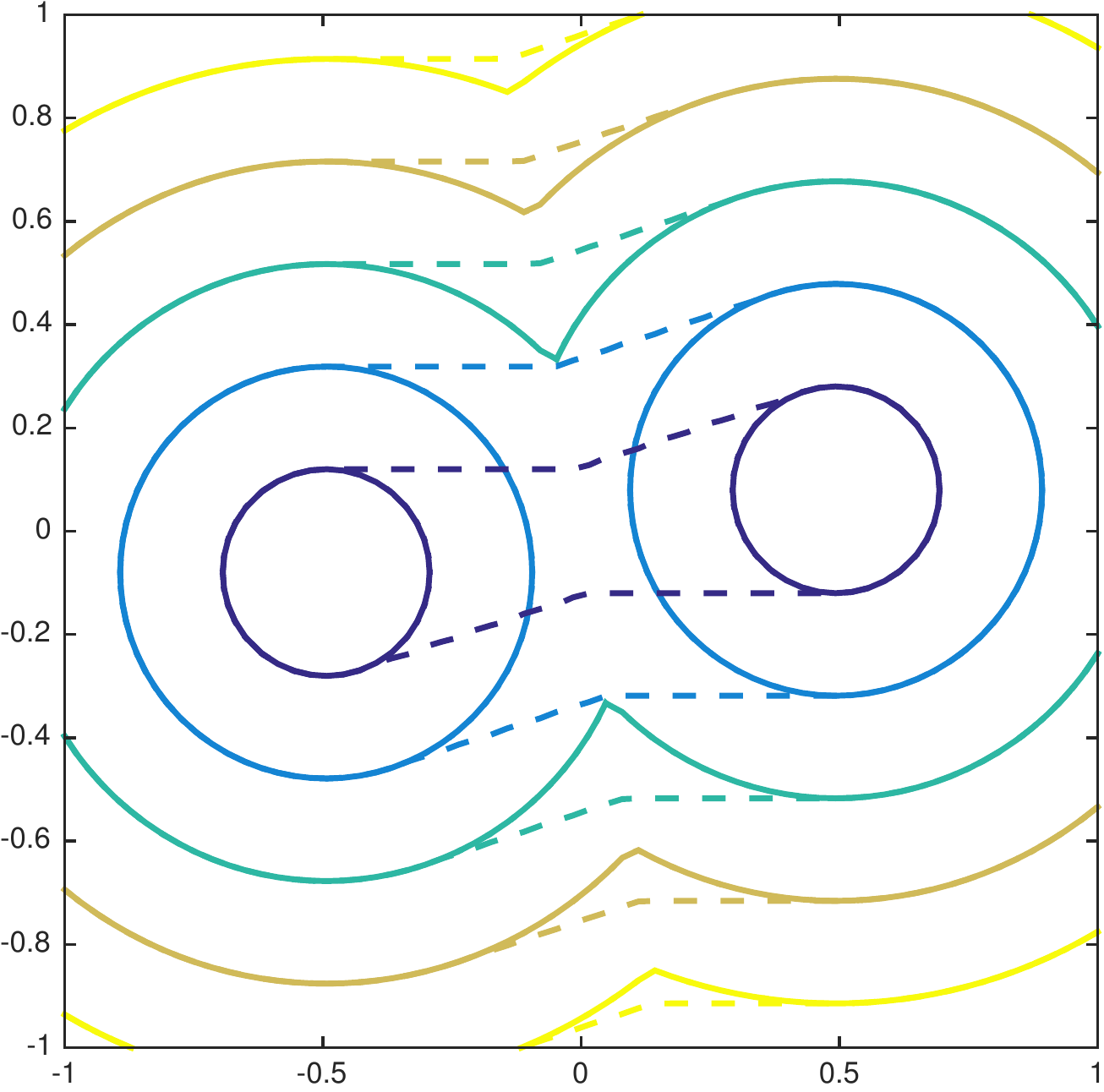}
\hspace{1.5cm}
\includegraphics[width=.425\textwidth]{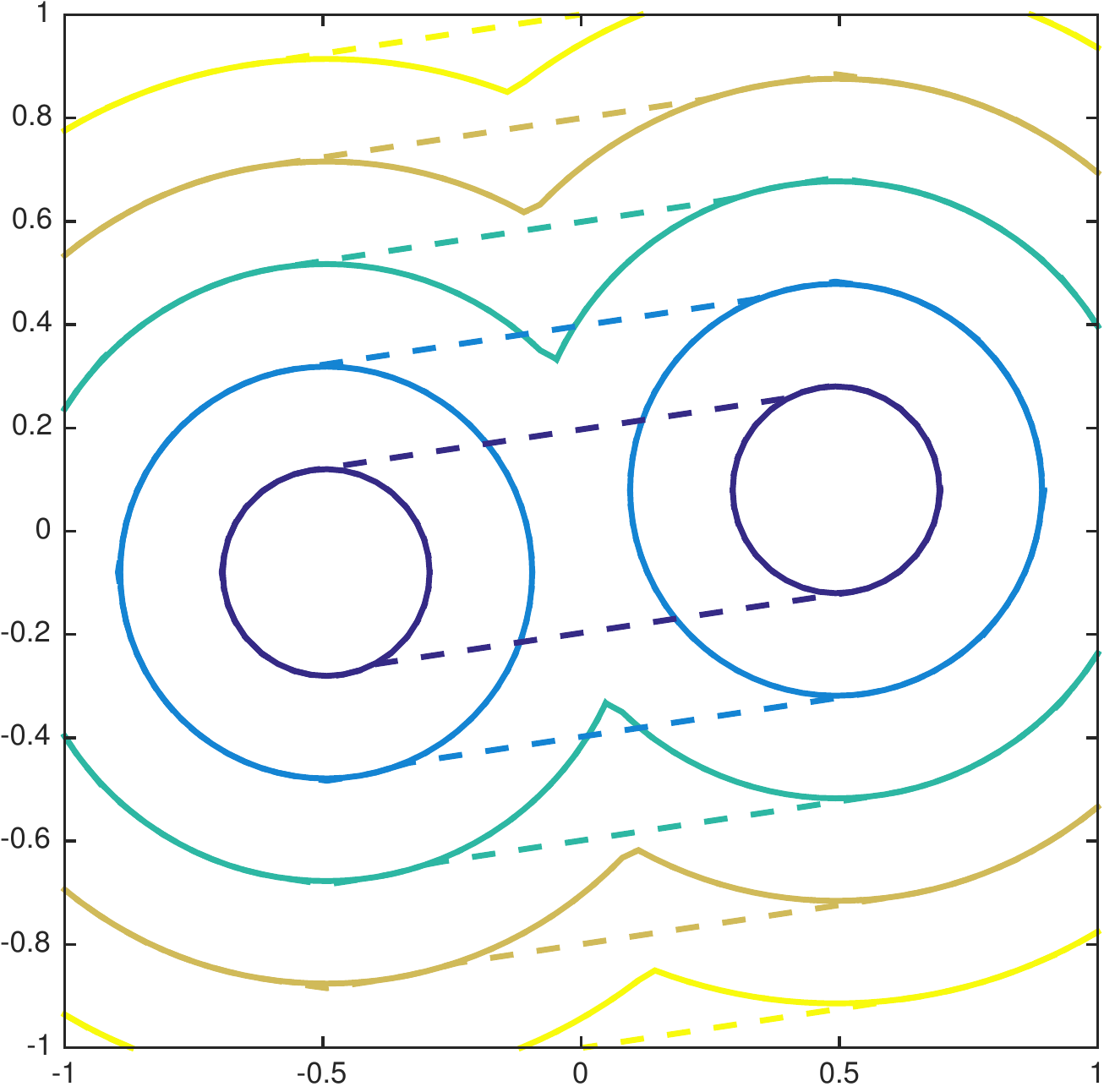}
\caption{Example \ref{example_nonaligned}.  Left: the $\Dir$-QCE can miss convexities along missing directions. Right: The $\e$-robust QCE (with $\e=0.15$) corrects this without using more direcitons}
\label{fig:nongridaligned}
\end{figure}

\end{example}

\begin{example}[Robust QCE]
\label{example_robust}
This example demonstrates how, in two dimensions, the robust QCE perturbs flat parts of the function (which are not minimizers).   First we define as continuous one dimensional function so that it 
is piecewise linear with slopes $40$ then 0, then $40$.  In particular, let $f$ interpolate the values, $f(0) = 0$, $f(.25) = 10$, $f(.75) = 10$, and extend with slope $40$.

Then take 
\[
g(x) = f(d(x))
\]
where $d(x)$ has square level sets, $d(x) = \max (\abs{x_1}, \abs{x_2})$. 
Although $g$ is quasiconvex, it is not $\e$-robustly quasiconvex; it is flat along the level set $g=10$.
We computed the  $\e=0.8$-robustly quasiconvex envelope. Results using a direction set of width $5$ are displayed in \autoref{fig:robust}.  The top images (which are inverted for visualization purposes) show a surface plot of $g$ and $u$.  The flat part is evident for $g$.  The function $u$ is bumped up (as visualized).  The lower images show contour lines, zoomed in to emphasize the large flat part for $g$ on the 10-level set.  Then level sets of $u$ spread out from 7 to 10, with a small square becoming rounder at the 8 and 9 level set, and then becoming square again by the 11 level set.   The level sets are rounder, but they are clearly not circles, they appear to have peaks along the midpoints of the flat parts of the level sets of $g$, which correspond to the axes. 
\end{example}

\begin{figure}[t]
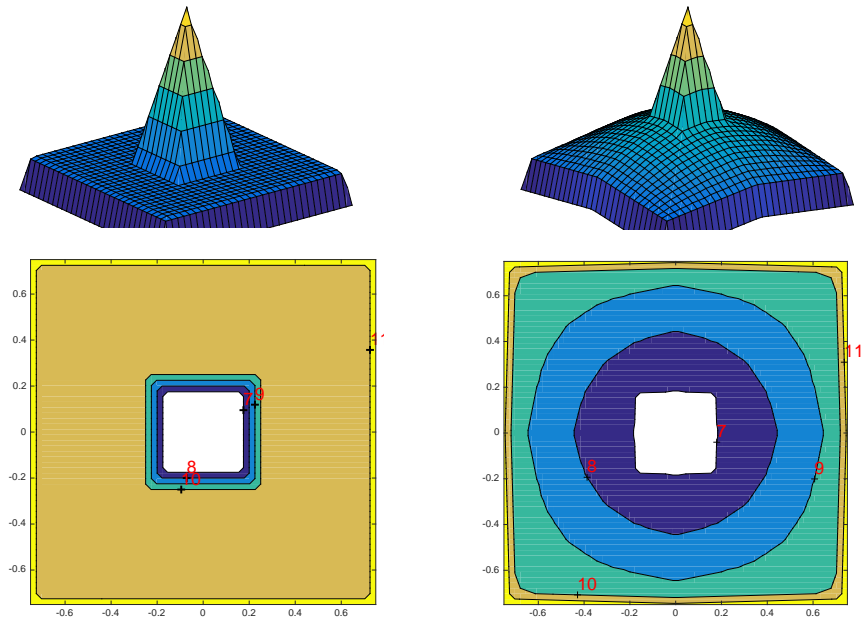

\centering
\includegraphics[width=.35\textwidth]{ls_robust_surff}
\hspace{2cm}
\includegraphics[width=.35\textwidth]{ls_robust_surfu}
\\
\includegraphics[width=.4\textwidth]{ls_robust_contourf}
\hspace{1cm}
\includegraphics[width=.4\textwidth]{ls_robust_contouru}
\caption{Example \ref{example_robust}. Top: Surface plot (inverted) of a function and its robust QCE. Bottom: Contours plot.}
\label{fig:robust}
\end{figure}

\begin{example}[3-D example]
We also consider the 3-dimensional analogue of $g^{\theta,\alpha}$, where now one of the cones has been shifted down:
\[
g(x) = \min\left\{\sqrt{(x_1-0.5)^2+x_2^2+x_3^2},\sqrt{(x_1+0.5)^2+x_2^2+x_3^2}-1/3\right\}
\]
The results, computed on a $64\times64\times64$ grid, are found in \autoref{3D_example}.

\begin{figure}[t]
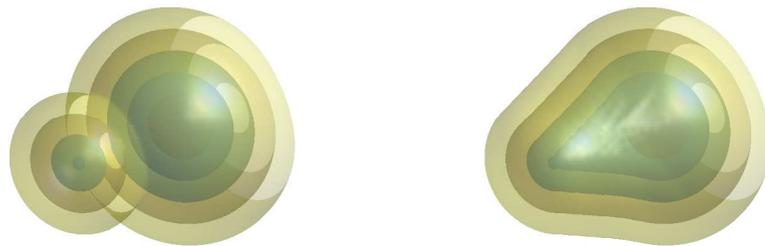

\centering
\includegraphics[width=.49\textwidth]{3d_maxcones_surf}
\includegraphics[width=.49\textwidth]{3d_maxcones_QCEsurf}
\caption{Level surfaces of $g$ (left) and its quasiconvex envelope (right). A stencil of width 4 was used on a $64\times64\times64$ grid.}
\label{3D_example}
\end{figure} 

\end{example}

\bibliographystyle{alpha}
\bibliography{QC_arxiv}

\end{document}